\newtheorem{Th}{Theorem}[section]
\newtheorem{Prop}[Th]{Proposition}
\newtheorem{Ques}{Question}
\newtheorem*{theorem*}{Theorem}
\newcommand{\Tr}{{\mathrm{Tr}}}
\newcommand{\tr}{{\mathrm{tr}}}
\newcommand{\E}{{\mathbb{E}}}
\newcommand{\M}{{\mathbb{M}}}
\newcommand{\R}{{\mathbb{R}}}
\newcommand{\A}{{\mathcal{A}}}
\newcommand{\Un}{{\mathcal{U}_n}}
\newcommand{\TL}{\text{TL}}
\newcommand{\Wg}{\text{Wg}}
\newcommand{\Gr}{\text{Gr}}
\newcommand{\Fix}{\text{Fix}}
\numberwithin{equation}{section}
\begin{document}
 
\title{Moment Methods on compact groups: Weingarten calculus and its applications.}

\author{Beno\^\i{}t Collins}

\address{Mathematics Department, Kyoto University, Japan} \email{collins@math.kyoto-u.ac.jp}

\begin{abstract}
A fundamental property of compact groups and compact quantum groups is 
the existence and uniqueness of a left and right invariant probability -- the Haar measure. 
This is a natural playground for classical and quantum probability,
provided it is possible to compute its moments. 
Weingarten calculus addresses this question in a systematic way.
The purpose of this manuscript is to survey recent developments,
describe  some salient theoretical properties of Weingarten functions, 
as well as
 applications of this calculus to random matrix theory, quantum probability, and algebra,
  mathematical physics and operator algebras.
\end{abstract}

\maketitle

\section{Introduction}
One of the key properties of a compact group $G$ is that it admits a unique left and right invariant probability measure
$\mu_G$. It is called the \emph{Haar measure}, and we refer to \cite{MR2098271} for reference.
In other words, $\mu_G(G)=1$, and
for any Borel subset $A$ of $G$ and $g\in G$,
$\mu_G(Ag)=\mu_G(gA)=\mu_G(A)$, where $Ag=\{hg, h\in A\}$ and $gA=\{gh, h\in A\}$.
The left and right invariance together with the uniqueness of $\mu_G$ readily imply that $\mu_G (A^{-1})=\mu_G(A)$.
The standard proofs of the existence of the Haar measure are not constructive. 
In the more general context of locally compact groups, a left (resp. right) invariant measure exists too.
It is finite if and only if the group is compact and uniqueness is up to a non-negative scalar multiple. 
In addition, the left and right Haar measures need not be the same. 
For locally compact groups, a classical proof of existence imitates the construction of the 
Lebesgue measure on $\R$ and resorts to outer measures. 
In the specific case of compact groups, a fixed point argument can be applied.
Either way, in both cases, the proof of existence is not constructive, in the sense that it does not tell us:
\emph{how to integrate functions}?
Weingarten calculus is about addressing this problem systematically.
\emph{Which functions} one wants to integrate needs, of course, to be clarified. 
We focus on the case of matrix groups, for which
there are very natural candidates: polynomials in coordinate functions.

We recast this problem as the question of \emph{computing the moments} of the Haar measure. 
Recall that
for a real random variable $X$, its moments are by definition the sequence $\E(X^k), k\ge 0$ -- whenever they are
defined. 
If the variable is vector valued in $\mathbb{R}^n$, i.e. $X=(X_1,\ldots , X_n)$, then the moments
are the numbers $\E(X_1^{k_1}\ldots X_n^{k_n}), k_1,\ldots k_n\ge 0$.
Naturally, the existence of moments is not granted and is subject to the integrability of the functions.  
In the case of matrix compact groups, we have $G\subset \M_n(\mathbb{C})=\R^{2n^2}$ therefore, we may 
consider that the random variable we are studying is a random vector in $\R^{2n^2}$ whose distribution is the
Haar measure with respect to the above inclusion.
In this sense, we are really considering a moment problem. 
For this reason, we do not consider only coordinate functions but also
their complex conjugates in our moment problem.

The goal of this note is to provide an account of Weingarten calculus and, in particular, its multiple applications,
with emphasis on the moment aspects and applications. 
From the point of view of the theory, there have been many approaches to computing integrals of functions with
respect to the Haar measure. We enumerate here a few important ones.
\begin{enumerate}
 \item Historically, the first non-trivial functions computed are arguably Fourier transform, e.g., the 
 Harish-Chandra integral,
 \cite{MR84104}.
 The literature is vast and started from the initial papers of Harish-Chandra and Itzykson Zuber until now; however, we do not elaborate too much on this field as we focus on polynomial integrals. 
 These techniques involve representation theory, symplectic geometry, and complex analysis. 
 We refer to \cite{MR3900830} for a recent approach and to the bibliography therein for references.
 \item Geometric techniques are natural because the measure can be described locally with differential geometry when compact groups are manifolds. 
They are efficient for small groups. We refer, for example, to  \cite{MR2402345} 
for such techniques and gaussianization methods, with application to quantum groups. 
Geometry is also helpful to compute specific functions, such as polynomials in one row or column
with respect to orthogonal or unitary groups. 
\item Probability, changes of variables, and stochastic calculus are natural tools to try to compute the moments of Haar measures. For example, Rains in \cite{MR1468398}
used Brownian motion on compact groups and the fact that the Haar measure is the unique invariant measure to compute a complete set of relations.
Subsequently, L\'evy, Dahlqvist, Kemp, and the author have made progress on understanding the unitary multiplicative Brownian version of Weingarten calculus in \cite{MR2407946,MR3748321}.
\item Representation theory has always been ubiquitous in the quest for calculating the Haar measure. 
A first significant set of applications can be found by \cite{MR1274717}, but results were already available by
\cite{MR593129,MR471696,MR456111}.
\item Combinatorial interpretations of the Haar measure in some specific cases were initiated in \cite{MR1411614}. 
In another direction, there were the notable works of \cite{MR1411614}.
Subsequently, new combinatorial techniques were developed in 
 \cite{MR1959915,MR2531371}, and we refer to
\cite{2010.13661} for substantial generalizations. 
We also refer \cite{MR4011702} 
for modern interpretations and applications to geometric group theory.
\end{enumerate}

As for the applications, they can be found in a considerable amount of areas, including:
theoretical physics (2D quantum gravity, matrix integrals, random tensors), mathematical physics (quantum information theory, 
Quantum spin chains), operator algebras (free probability), probability (limit theorems), representation theory, statistics, finance, machine learning, group theory.
The foundations of Weingarten calculus, as well as its applications, keep expanding rapidly, and this
manuscript is a subjective snapshot of the state of the art.
This introduction is followed by section 2 that contains the foundations and theoretical results about the Weingarten functions. Section 3 investigates `simple' asymptotics of Weingarten functions and applications to Random Matrix 
theory.
Section 4 deals with `higher order' asymptotics and applications to mathematical physics. 
Section 5 considers `uniform' asymptotics and applications to functional analysis, whereas the last section 
contains concluding remarks and perspectives.

\section{Weingarten calculus}

\subsection{Notation}

On the complex matrix algebra $\M_n(\mathbb{C})$, we denote by $\overline A$ the entrywise conjugate of a matrix $A$
and $A^*=\overline A^t$ the adjoint. 
In the sequel, we work with a compact matrix group $G$, i.e., a subgroup of $GL_n(\mathbb{C})$ of invertible
complex matrices that is compact for the induced topology. It is known that such a group is conjugate inside $GL_n(\mathbb{C})$ to the unitary group
$\Un=\{U, UU^*=U^*U=1_n\}$.
Writing an element $U$ of $\Un$ as a matrix $U=(u_{ij})_{i,j\in\{1,\ldots ,n\}}$, we view the entries $u_{ij}$
as polynomial functions $\Un\to\mathbb{C}$.
As functions, they form a $*$-algebra --  the $*$-operation being the complex conjugation. By construction, they
are separating for $\Un$, therefore, by 
Weierstrass' theorem, the $*$-algebra generated by $u_{ij},i,j\in\{1,\ldots ,n\}$, which is the algebra of polynomial
functions on $\Un$ is a dense subalgebra for the sup norm in the algebra of continuous functions on $G$.

By Riesz' theorem, understanding the Haar measure boils down to understanding
$\int_{U\in G} f(U)d\mu_G(U)$ for any continuous function. By density and linearity, it is actually 
enough to be able to calculate 
systematically 
$$\int_{U\in G} u_{i_1j_1}\ldots u_{i_kj_k}\overline{u_{i_1'j_1'}\ldots u_{i_{k'}'j_{k'}'}}d\mu_G(U).$$ 
No answer was known in full generality until a systematic development was initiated in 
\cite{MR1959915,MR2217291}.
However, in the particular case of of $\Un, \mathcal{O}_n$, 
an algorithm to calculate a development in large $n$ was devised in \cite{MR456111,MR471696}, with further 
improvements by \cite{MR758417}, and character expansions were obtained in \cite{MR593129}, 
however these approaches are largely independent.
Likewise, Woronowicz obtained a formula for the moments of characters in the case of quantum groups in 
\cite{MR901157}.
Interestingly, motivated by probability questions, the same formula was rediscovered independently by Diaconis-Shashahani \cite{MR1274717}
in the particular case of compact matrix groups.

\subsection{Fundamental formula}

Although the partial answers to the question of computing moments were rather involved, the general answer turns out, 
in hindsight, to be surprisingly simple, so we describe it here. We also refer to
\cite{2109.14890} for an invitation to the theory.
We first start with the following notation: for an element $U=(u_{ij})\in G\subset \M_n(\mathbb{C})$,  $\overline{U}$ is the entry-wise conjugate, i.e.  $\overline{U}=(\overline{U_{ij}})$. Since $U$ is unitary, $\overline{U}$ is unitary, too.
We denote by $V=\mathbb{C}^n$ the fundamental representation of $G$, 
and
$\overline{V}$ the contragredient representation. 
For a general representation $W$ of $G$, $\Fix (G,W)$ is the vector subspace of $W$ of fixed points under the action of
$G$, i.e. $\Fix (G,W)=\{x\in W, \forall U\in G, Ux=x\}$.
Finally, we fix two integers $k,k'$, and set
$$Z_G=\int_{U\in G} U^{\otimes k}\otimes \overline{U}^{\otimes k'}d\mu_G(U),$$
and abbreviate $\Fix (G,V^{\otimes k}\otimes\overline{V}^{\otimes k'})$ into 
$\Fix (G,k,k')$.

\begin{Prop}
The matrix $Z_G$ is the orthogonal projection onto $\Fix (G,k,k')$.
\end{Prop}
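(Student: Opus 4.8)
The plan is to show $Z_G$ is (i) the identity on $\Fix(G,k,k')$, (ii) zero on its orthogonal complement, and (iii) self-adjoint, which together characterize the orthogonal projection. Throughout write $W = V^{\otimes k} \otimes \overline{V}^{\otimes k'}$ and let $\rho\colon G \to GL(W)$ denote the representation $U \mapsto U^{\otimes k} \otimes \overline{U}^{\otimes k'}$, so that $Z_G = \int_G \rho(U)\, d\mu_G(U)$.

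The first step is to exploit the translation invariance of the Haar measure. For any fixed $V_0 \in G$, the substitution $U \mapsto V_0 U$ (resp. $U \mapsto U V_0$) leaves $\mu_G$ unchanged, hence $\rho(V_0) Z_G = Z_G = Z_G \rho(V_0)$. In other words $Z_G$ intertwines $\rho$ with the trivial representation on both sides, so its range is contained in $\Fix(G,k,k')$ and it kills anything $\rho$ moves nontrivially. Conversely, if $x \in \Fix(G,k,k')$ then $\rho(U)x = x$ for all $U$, so $Z_G x = \int_G \rho(U)x\, d\mu_G(U) = \int_G x\, d\mu_G(U) = x$ since $\mu_G$ is a probability measure. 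Thus $Z_G$ acts as the identity on $\Fix(G,k,k')$ and has range exactly $\Fix(G,k,k')$; in particular $Z_G^2 = Z_G$, so $Z_G$ is an idempotent onto $\Fix(G,k,k')$.

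The remaining point is that this idempotent is the \emph{orthogonal} projection, i.e. $Z_G = Z_G^*$. Here I would use that each $\rho(U)$ is a unitary operator on $W$ equipped with its standard Hermitian inner product: indeed $U$ unitary implies $\overline{U}$ unitary, and tensor products of unitaries are unitary, so $\rho(U)^* = \rho(U)^{-1} = \rho(U^{-1})$. Taking adjoints inside the integral (legitimate as $W$ is finite-dimensional and the integrand is continuous and bounded) gives $Z_G^* = \int_G \rho(U)^*\, d\mu_G(U) = \int_G \rho(U^{-1})\, d\mu_G(U)$, and the inversion-invariance of the Haar measure noted in the introduction — $\mu_G(A^{-1}) = \mu_G(A)$ — turns this last integral back into $Z_G$. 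Hence $Z_G$ is a self-adjoint idempotent with range $\Fix(G,k,k')$, which is precisely the orthogonal projection onto that subspace.

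I do not expect a genuine obstacle here; the argument is a clean application of invariance of the Haar measure together with finite-dimensionality. The only mild technical care needed is justifying that the matrix-valued integral commutes with the algebraic operations used (left/right multiplication by $\rho(V_0)$, adjoint, and the substitution $U \mapsto U^{-1}$), which is immediate entrywise since each entry of $\rho(U)$ is a polynomial in the $u_{ij}$ and $\overline{u_{ij}}$, hence continuous and bounded on the compact group $G$. One could alternatively phrase the whole proof representation-theoretically — decompose $W$ into isotypic components and observe that $Z_G$ is the projection onto the trivial isotypic component by Schur orthogonality — but the direct computation above is shorter and self-contained.
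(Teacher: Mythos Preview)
Your argument is correct and shares the same skeleton as the paper's: translation invariance of $\mu_G$ gives $Z_G \rho(U') = Z_G$ (hence idempotence after a second integration), inversion invariance gives $Z_G^* = Z_G$, and the identity on $\Fix(G,k,k')$ is immediate. The one substantive difference lies in how the containment $\mathrm{Im}(Z_G) \subseteq \Fix(G,k,k')$ is established. You use left invariance directly: from $\rho(V_0) Z_G = Z_G$ you read off that every $Z_G x$ is fixed by $\rho$. The paper instead argues geometrically: for $x \notin \Fix(G,k,k')$ there is some $U$ with $\rho(U)x \ne x$, and since each $\rho(U)$ is unitary, strict convexity of the Euclidean ball forces $\|Z_G x\|_2 < \|x\|_2$, so $x$ cannot lie in the image of the self-adjoint idempotent $Z_G$. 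Your route is shorter and purely algebraic; the paper's convexity argument, while slightly longer, has the virtue of making the role of unitarity of $\rho$ explicit in bounding the image (and generalizes to contexts where only one-sided invariance is available). A minor quibble: your aside that $Z_G$ ``kills anything $\rho$ moves nontrivially'' is imprecise as stated (it kills $\Fix^\perp$, not the complement of $\Fix$), but you do not actually rely on it, so it does no harm.
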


\begin{proof}
Since the distribution of $U$ and $UU'$ is the same for any fixed 
$U'\in G$, it implies that for any $U\in G$, $Z_G=Z_G\cdot U^{\otimes k}\otimes \overline{U^{\otimes k'}}$.
Integrating once more over $U$ gives the fact that $Z_G$ is a projection.
The fact that the map $U\to U^{-1}=U^*$ preserves the Haar measure
implies that $Z_G=Z_G^*$.
From the definition of invariance, for $x\in \Fix (G,k,k')$ and for any $U\in G$ one has
$U^{\otimes k}\otimes \overline{U^{\otimes k'}} \cdot x=x$.
Integrating with respect to the Haar measure of $G$ gives
$Z_G\cdot x=x$.
Finally, take $x$ outside $\Fix (G,k,k')$. It means that there exists $U$ such that 
$$U^{\otimes k}\otimes \overline{U^{\otimes k'}} x\ne x .$$
However, $||U^{\otimes k}\otimes \overline{U^{\otimes k'}} x||_2=|| x||_2$.
Thanks to the strict convexity of the Euclidean ball, after averaging over the Haar measure we 
necessarily get $||Z_Gx||_2<||x||_2$, which implies that $x$ is not in $\mathrm{Im} (Z_G)$.
Therefore we proved that $\mathrm{Im} (Z_G)=\Fix (G, k,k')$.
\end{proof}
From this, we can deduce an integration formula as soon as we have a 
generating family $y_1,\ldots , y_l$
 for $\Fix (G,k,k')$ (for any $k,k'$).
Let 
$$\Gr=(g_{ij})_{i,j\in\{1,\ldots , l\}}$$ be its Gram matrix, i.e.
$g_{ij}=\langle y_i,y_j\rangle$ and $W=(w_{ij})$ the pseudo-inverse of $\Gr$.
Let $E_1,\ldots , E_n$ be the canonical orthonormal basis of $V=\mathbb{C}^n$.
Let $k$ be a number and we consider the tensor space $V^{\otimes k}$ 
with its canonical orthogonal basis $E_I=e_{i_1}\otimes\ldots \otimes e_{i_k}$,
where $I=(i_1,\ldots ,i_k)$ is a multi index in $\{1,\ldots n\}^k$.
Let $I=(i_1,\ldots ,i_k, i_1',\ldots ,i_{k'}')$, $J=(j_1,\ldots ,j_k,j_1',\ldots ,j_{k'}')$ be 
$k+k'$-indices, i.e. elements of $\{1,\ldots ,n\}^{k+k'}$.
Then
\begin{Th}\label{th-general}
$$\int_{U\in G} u_{i_1j_1}\ldots u_{i_kj_k}\overline{u_{i_1'j_1'}\ldots u_{i_{k'}'j_{k'}'}}d\mu_G(U)=
\langle Z_G ,E_I\otimes E_J\rangle
=\sum_{i,j\in\{1,\ldots ,l\}}\langle E_I,y_i\rangle \langle  y_j,E_J \rangle w_{ij}$$
\end{Th}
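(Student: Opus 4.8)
The plan is to unwind the two claimed equalities in Theorem~\ref{th-general} separately, using the preceding Proposition as the only substantive input. First I would establish the left-hand equality $\int_{U\in G} u_{i_1j_1}\cdots u_{i_kj_k}\overline{u_{i_1'j_1'}\cdots u_{i_{k'}'j_{k'}'}}\,d\mu_G(U)=\langle Z_G, E_I\otimes E_J\rangle$. This is really just a bookkeeping identity: by definition $Z_G=\int_{U\in G} U^{\otimes k}\otimes\overline U^{\otimes k'}\,d\mu_G(U)$, so I would expand the matrix coefficient of $U^{\otimes k}\otimes\overline U^{\otimes k'}$ in the orthonormal basis $\{E_I\}$ of $V^{\otimes k}\otimes\overline V^{\otimes k'}$. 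The $(I,J)$ entry of $U^{\otimes k}$ (on the first $k$ legs) is $u_{i_1j_1}\cdots u_{i_kj_k}$ and the entry of $\overline U^{\otimes k'}$ (on the last $k'$ legs) is $\overline{u_{i_1'j_1'}}\cdots\overline{u_{i_{k'}'j_{k'}'}}$; multiplying and integrating term by term (the integral commutes with the finite sum defining the matrix product) gives exactly $\langle Z_G, E_I\otimes E_J\rangle$. Here I should be a little careful about the convention for $\langle Z_G, E_I\otimes E_J\rangle$ — it denotes the $(I,J)$ matrix entry $\langle E_I, Z_G\, E_J\rangle$ — and about where the complex conjugation lands, since the inner product is sesquilinear; matching the conjugations in the statement is the one place minor sign/bar errors could creep in.

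Next I would prove the right-hand equality, $\langle Z_G, E_I\otimes E_J\rangle=\sum_{i,j} \langle E_I,y_i\rangle\langle y_j,E_J\rangle w_{ij}$, which is the genuinely informative step. By the Proposition, $Z_G$ is the orthogonal projection onto $\Fix(G,k,k')=\operatorname{span}\{y_1,\dots,y_l\}$. So I need the standard fact that the orthogonal projection onto the span of a (possibly linearly dependent) finite family $y_1,\dots,y_l$ with Gram matrix $\Gr=(\langle y_i,y_j\rangle)$ and pseudo-inverse $W=(w_{ij})$ is given by $P=\sum_{i,j} w_{ij}\, |y_i\rangle\langle y_j|$, i.e. $Px=\sum_{i,j} w_{ij}\langle y_j,x\rangle y_i$. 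Granting this, I would simply compute $\langle E_I, Z_G\, E_J\rangle=\langle E_I, \sum_{i,j} w_{ij}\langle y_j,E_J\rangle y_i\rangle=\sum_{i,j} w_{ij}\langle y_j,E_J\rangle\langle E_I,y_i\rangle$, which is the claimed formula after reordering the scalars.

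The only lemma that needs an actual argument is that pseudo-inverse formula for the projection onto a span of linearly dependent vectors, so I would include a short proof of it. One clean way: first handle the linearly independent case, where $\Gr$ is invertible and one checks directly that $P=\sum_{i,j}(\Gr^{-1})_{ij}|y_i\rangle\langle y_j|$ is self-adjoint, idempotent, fixes each $y_m$ (using $\sum_i(\Gr^{-1})_{ij}\langle y_m,y_i\rangle$... wait, rather $\sum_j(\Gr^{-1})_{ij}g_{jm}=\delta_{im}$), and kills the orthogonal complement; then for the general case, pass to a maximal linearly independent subfamily, or argue via the singular value decomposition of the matrix $Y=[y_1|\cdots|y_l]$, noting $\Gr=Y^*Y$, $P=YY^+$, and $W=\Gr^+=(Y^*Y)^+$, so $YWY^*=Y(Y^*Y)^+Y^*=YY^+=P$ by the standard identity $A(A^*A)^+A^*=AA^+$. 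Either route is routine linear algebra. The main (very mild) obstacle is purely presentational: keeping the bra–ket conventions, the placement of complex conjugates, and the index ordering $(i,j)$ versus $(j,i)$ consistent between the matrix-entry interpretation of $\langle Z_G, E_I\otimes E_J\rangle$, the Gram matrix, and its pseudo-inverse, so that the final formula comes out exactly as stated rather than with $y_i$ and $y_j$ swapped.
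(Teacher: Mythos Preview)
Your proposal is correct and follows exactly the route the paper intends: the paper does not spell out a proof of Theorem~\ref{th-general} but presents it as an immediate consequence of the Proposition (that $Z_G$ is the orthogonal projection onto $\Fix(G,k,k')$) together with the standard Gram-matrix/pseudo-inverse formula for an orthogonal projection onto the span of a generating family. Your write-up simply makes explicit the two ingredients the paper leaves implicit---the bookkeeping identity for the first equality and the projection formula $P=\sum_{i,j}w_{ij}\,|y_i\rangle\langle y_j|$ for the second---so there is nothing to add beyond tidying the bra--ket conventions you already flagged.
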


\subsection{Examples with classical groups}
For interesting applications to be derived, the following conditions must be met:
\begin{enumerate}
\item
$y_1,\ldots , y_l$ must be easy to describe.
\item
 $\Gr$ should be easy to compute -- and if possible, its inverse, the Weingarten matrix too.
\item
$\langle E_I,y_i\rangle$ should be easy to compute.
\end{enumerate}
Let us describe some fundamental examples. 
Let $P_2(k)$ be the collection of \emph{pair partitions} on $\{1,\ldots , k\}$
($P_2(k)$ is empty if $k$ is odd, and its cardinal is $1\cdot3\cdot \ldots \cdot (k-1)=k!!$  if $k$ is even).
Typically, a partition $\pi\in P_2(k)$ consists of $k/2$ blocks of cardinal $2$,
$\pi=\{V_1,\ldots , V_{k/2}\}$, and we call
$\delta_{\pi, I}$ the multi-index Kronecker function whose value is $1$ if, for any block $V=\{k<k'\}$ of $\pi$,
$i_k=i_{k'}$, and zero in all other cases.
Likewise, 
we call $E_{\pi}=\sum_I E_I\delta_{\pi, I}$.

In \cite{MR2217291}, we obtained a complete solution to computing moments of Haar integrals for 
$\mathcal{O}_n,\Un,\mathcal{S}p_n$. 
The following theorem describes this method. For convenience, we stick to the case of $\mathcal{O}_n,\Un$.
\begin{Th}\label{th-un-on}
The entries of $\Gr$ are $\langle E_{\pi},E_{\pi'} \rangle=n^{\mathrm{loops} ( \pi,\pi')}$, and
we have $\langle E_I, E_{\pi}\rangle =\delta_{\pi, I}$.
\begin{itemize}
\item
The orthogonal case:
For $\mathcal{O}_n$, $E_{\pi}, \pi\in P_2(k)$ is a generating family of the image of $Z_{\mathcal{O}_n}$
\item
The unitary case: Thanks to commutativity and setting $2k'=k$, we consider the subset of $P_2(k)$ of pair partitions such that each block pairs one of the first $k'$ elements with one of the last $k'$ elements. 
This set is in natural bijection with the permutations $S_{k'}$, and it is the generating family of the image of $Z_{\Un}$.
\end{itemize}
\end{Th}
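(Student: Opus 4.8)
\subsection*{Proof proposal}

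The plan is to dispatch the two Gram--matrix identities first, since they are purely combinatorial, and then the two spanning statements, which carry all the representation--theoretic weight. For the Gram matrix I would expand $E_{\pi}=\sum_{J} \delta_{\pi,J}E_{J}$ against the orthonormal basis $\{E_{J}\}$. Immediately $\langle E_{I},E_{\pi}\rangle=\sum_{J}\delta_{\pi,J}\langle E_{I},E_{J}\rangle=\delta_{\pi,I}$, and $\langle E_{\pi},E_{\pi'}\rangle=\sum_{I}\delta_{\pi,I}\delta_{\pi',I}$ is the number of multi-indices $I$ that are simultaneously constant along every block of $\pi$ and along every block of $\pi'$. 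Placing the blocks of $\pi$ together with the blocks of $\pi'$ as the edge set of a graph on the index positions, every vertex then has degree two, so the graph is a disjoint union of cycles; an admissible $I$ is a function constant on each connected component, and the count is $n^{\mathrm{loops}(\pi,\pi')}$ with $\mathrm{loops}(\pi,\pi')$ the number of those cycles.

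Next I would prove the easy inclusion $\mathrm{span}\{E_{\pi}\}\subseteq \Fix(G,k,k')$. It suffices to check the two-leg building blocks and then tensor and permute. In the orthogonal case $E_{\{1,2\}}=\sum_{i} e_{i}\otimes e_{i}$ and $(O\otimes O)E_{\{1,2\}}=\sum_{a,b}(OO^{t})_{ab}\,e_{a}\otimes e_{b}=E_{\{1,2\}}$ because $OO^{t}=1_{n}$; in the unitary case $E_{\{1,2\}}=\sum_{i} e_{i}\otimes\overline{e_{i}}$ and $(U\otimes\overline U)E_{\{1,2\}}=\sum_{a,b}(UU^{*})_{ab}\,e_{a}\otimes\overline{e_{b}}=E_{\{1,2\}}$. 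An arbitrary $E_{\pi}$ is the image of a tensor power of such elementary invariants under a permutation of the tensor legs, and since $G^{\otimes(k+k')}$ commutes with leg permutations, $E_{\pi}$ is again fixed. Combined with the Proposition identifying $\mathrm{Im}\,Z_{G}=\Fix(G,k,k')$, this yields $\mathrm{span}\{E_{\pi}\}\subseteq\mathrm{Im}\,Z_{G}$.

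The real content is the reverse inclusion, namely that these obvious invariants already exhaust $\Fix$ --- this is the first fundamental theorem of invariant theory for the group at hand, and it is where I expect the one genuine difficulty to lie. For $\mathcal{O}_{n}$ one uses the invariant symmetric bilinear form to identify $V\cong V^{*}$, which turns $\Fix(\mathcal{O}_{n},V^{\otimes k})$ into the span of complete contractions, that is, of the $E_{\pi}$ with $\pi\in P_{2}(k)$ (Brauer--Weyl). For $\Un$ one first observes that the scalars $\{\lambda 1_{n}:|\lambda|=1\}\subset\Un$ act on $V^{\otimes k}\otimes\overline V^{\otimes k'}$ by $\lambda^{k-k'}$, so $\Fix$ vanishes unless the two exponents agree; writing their common value as $k'$ and the total number of legs as $2k'=k$, one identifies $\overline V\cong V^{*}$ so that $\Fix(\Un,V^{\otimes k'}\otimes(V^{*})^{\otimes k'})\cong\mathrm{End}_{\Un}(V^{\otimes k'})$, which by Schur--Weyl duality is spanned by the permutation operators indexed by $S_{k'}$; reading a permutation $\sigma$ as the pairing of lower leg $j$ with upper leg $\sigma(j)$ recovers exactly the announced subfamily of $P_{2}(2k')$.

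In a self-contained treatment I would, in the range $n\ge k$, establish these via the dimension count $\dim\mathrm{End}_{\Un}(V^{\otimes k'})=\sum_{\lambda\vdash k'}(\dim\mathrm{Specht}^{\lambda})^{2}=k'!$ together with the linear independence of the permutation operators (and the analogous Brauer-algebra statement for $\mathcal{O}_{n}$), and then note that the spanning property survives for all $n$ even though that independence does not. The honest obstacle is precisely this: producing \emph{all} invariants, as opposed to writing down the evident ones, is essentially a reproof of Schur--Weyl and Brauer duality, so in a survey one would most naturally cite these classical theorems rather than reconstruct them.
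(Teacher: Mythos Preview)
Your proposal is correct and follows essentially the same route as the paper: the paper dismisses the two Gram identities as ``direct calculations'' and attributes the two spanning statements to Schur--Weyl duality for $\Un$ and its Brauer analogue for $\mathcal{O}_n$, citing a reference rather than reproving them. Your write-up simply fleshes out those calculations (the loop count via the degree-two graph, the verification that each $E_\pi$ is fixed, the center argument forcing equal exponents) and is explicit that the hard inclusion is classical invariant theory one would cite --- exactly the paper's stance.
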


\begin{proof}
The first two points are direct calculations. 
The last two points are a reformulation of Schur-Weyl duality, respectively, in the case of the unitary group and of the orthogonal group (see, e.g., \cite{MR2522486}). 
\end{proof}

\subsection{Example with Quantum groups}
We finish the general theory of Weingarten calculus with a quick excursion through 
 compact matrix quantum groups. 
For the theory of compact quantum groups, we refer to  \cite{MR901157,MR943923}.
The  subtlety for quantum groups is that in general, we can not
capture all representations with just $U^{\otimes k}\otimes \overline{U^{\otimes k'}}$ because 
$U$ and $\overline{U}$ fail to commute in general. 
The theory of Tannaka-Krein duality for compact quantum groups is completely developed, and
in order to get a completely general formula, we must
instead consider 
$U^{\otimes k_1}\otimes \overline{U^{\otimes k_1'}}\otimes\ldots \otimes 
U^{\otimes k_p}\otimes \overline{U^{\otimes k_p'}}$.

Let us just illustrate the theory with the \emph{free quantum orthogonal group} 
$O_n^+$. It was introduced by Wang in \cite{MR1316765},
and its Tannaka-Krein dual was computed by Banica in \cite{MR1484551}.
Its algebra of polynomial functions
$\mathbb{C} (O_n^+)$ is the non-commutative unital $*$-algebra generated by $n^2$ self-adjoint elements $u_{ij}$ that satisfy the relation
$\sum_{k}u_{ik}u_{jk}=\delta_{ij}1$ and $\sum_{k}u_{ki}u_{kj}=\delta_{ij}1$.
Note that the abelianized version of this  unital $*$-algebra is the $*$-algebra of polynomial functions on $\mathcal{O}_n$,
which explains why it is called the free orthogonal quantum group.
There exists a unital $*$-algebra homomorphism, called the coproduct
$\Delta: \mathbb{C} (O_n^+)\to \mathbb{C} (O_n^+)\otimes \mathbb{C} (O_n^+)$
defined on generators by $\Delta u_{ij}=\sum_k u_{ik}\otimes u_{kj}$,
and a unique linear functional $\mu : \mathbb{C} (O_n^+)\to \mathbb{C}$ such that $\mu (1)=1$ and
$$(\mu\otimes Id)\Delta = 1\mu, (Id\otimes \mu)\Delta = 1\mu .$$
This functional is known as the Haar state, and it extends the notion of Haar measure on compact groups. 
Although the whole definition is completely algebraic, the proofs rely on functional analysis and operator 
algebras. 

However, the calculation of the Haar state is purely algebraic and just relies on the notion of non-crossing pair 
partitions, denoted by $NC_2(k)$, which are a subset of $P_2(k)$ defined as follows. 
A partition $\pi$ of $P_2(k)$ is non-crossing -- and therefore in $NC_2(k)$ if any two of its blocks $\{i, j \}$ and 
$\{i', j' \}$ fail to satisfy the crossing relations $i<j, i'<j', i<i'<j<j'$.
This notion was found to be of crucial use for free probability by Speicher, see, e.g. \cite{MR2266879}.
The following theorem is a particular case of a series of results that can be found in \cite{MR2341011}:
\begin{Th}
In the case of $O_n^+$,  for $U^{\otimes k}$,
the complete solution follows from the following result: 
$E_{\pi}, \pi\in NC_2(k)$ is a generating family of the image of $Z_{O_n^+}$
\end{Th}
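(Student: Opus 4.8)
The plan is to mimic the two cases of Theorem~\ref{th-un-on}, replacing classical Schur--Weyl duality by its quantum analogue. The first task is to record that the map $\pi\mapsto E_\pi$ makes sense: for $\pi\in P_2(k)$ the vector $E_\pi\in V^{\otimes k}$ is defined exactly as before, and the same direct calculation as in Theorem~\ref{th-un-on} shows $\langle E_I,E_\pi\rangle=\delta_{\pi,I}$ and $\langle E_\pi,E_{\pi'}\rangle=n^{\mathrm{loops}(\pi,\pi')}$. These facts do not depend on whether the group is classical or quantum, so they carry over verbatim; in particular, once we know which $\pi$ span $\Fix(O_n^+,k,0)$, Theorem~\ref{th-general} immediately produces the Weingarten integration formula for $O_n^+$, with the Gram (and Weingarten) matrix indexed by $NC_2(k)$.

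The substantive step is to identify $\Fix(O_n^+,V^{\otimes k})$ with $\mathrm{span}\{E_\pi:\pi\in NC_2(k)\}$. Here I would invoke Tannaka--Krein--Woronowicz duality for compact matrix quantum groups: the representation category of $O_n^+$, together with its spaces of intertwiners, is completely determined, and by Banica's computation of the Tannaka--Krein dual of $O_n^+$ (the relevant fact cited in the excerpt), the morphism spaces $\mathrm{Hom}(1,V^{\otimes k})=\Fix(O_n^+,V^{\otimes k})$ are precisely spanned by the ``non-crossing pairing'' vectors $E_\pi$, $\pi\in NC_2(k)$. Concretely, the category generated by the single intertwiner $E\in\mathrm{Hom}(1,V\otimes V)$ (the vector implementing $\sum_k u_{ik}u_{jk}=\delta_{ij}$) under tensor product, composition and adjoints is exactly the Temperley--Lieb category $\TL_n$, whose hom-spaces are free on the non-crossing diagrams; combining this with the universality of $O_n^+$ among compact quantum groups whose fundamental representation is self-conjugate gives the desired description of the fixed-point space. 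This is the analogue, for the free case, of the appearance of all of $P_2(k)$ (equivalently, the full Brauer category) in the classical orthogonal case.

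Putting the pieces together: by the Proposition, $Z_{O_n^+}$ is the orthogonal projection onto $\Fix(O_n^+,k,0)$; by the previous paragraph this space equals $\mathrm{span}\{E_\pi:\pi\in NC_2(k)\}$, so $\{E_\pi:\pi\in NC_2(k)\}$ is the sought generating family of $\mathrm{Im}(Z_{O_n^+})$, and the general integration formula of Theorem~\ref{th-general} applies with $y_i$ running over $NC_2(k)$. I expect the only genuine obstacle to be the second step — namely citing and correctly applying Tannaka--Krein duality to get that $NC_2(k)$, and not a larger or smaller set, indexes the intertwiner space. One should be slightly careful about linear independence: for $n\ge 2$ the Gram matrix $(n^{\mathrm{loops}(\pi,\pi')})_{\pi,\pi'\in NC_2(k)}$ is nonsingular (its determinant is a known product of Chebyshev-type factors), so the $E_\pi$ form a basis; for small $n$ relations may occur, but since Theorem~\ref{th-general} is stated with a \emph{generating} family and a pseudo-inverse, this does not affect the statement. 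Everything else is the same bookkeeping as in the classical orthogonal case.
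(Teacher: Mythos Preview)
Your proposal is correct and matches the paper's approach: the paper does not spell out a proof but simply records the theorem as a particular case of results in \cite{MR2341011}, the substantive input being Banica's computation \cite{MR1484551} of the Tannaka--Krein dual of $O_n^+$, which is exactly what you invoke. Your write-up is in fact more detailed than the paper's own treatment, and your parallel with Theorem~\ref{th-un-on} (replacing Schur--Weyl/Brauer duality by the Temperley--Lieb description of the intertwiner spaces) is precisely the intended analogy; the remark about linear independence and the pseudo-inverse is a nice extra observation that the paper does not make explicit here.
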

Note that  since $U=\overline U$, 
it is enough to consider $U^{\otimes k}$ to compute the Haar measure fully.
We refer to 
\cite{MR2779129,MR2274824,MR2437834} for applications of classical Weingarten functions to quantum groups and to 
\cite{MR2402345,MR2341011,MR2776622} for further developments of quantum Weingarten theory.

\subsection{Representation theoretic formulas}

A representation theoretic approach to Weingarten calculus 
is available for many families of groups, including unitary, orthogonal
 and symplectic groups. 
Here we only describe the unitary group, and for the others, we refer to \cite{MR2567222,MR3077830}. 

Call $S_k$ the symmetric group and
consider its \emph{group algebra} $\mathbb{C}[S_k]$ -- the unital $*$-algebra whose basis as a vector space
is $\lambda_{\sigma},\sigma \in S_k$, and endowed with the 
multiplication $\lambda_{\sigma}\lambda_{\tau}=\lambda_{\sigma\tau}$
and the $*$-structure $\lambda_{\sigma}^*=\lambda_{\sigma^{-1}}$.
We follow standard representation theoretic notation, see, e.g. \cite{MR2643487}
 and $\lambda\vdash k$ denotes a Young diagram $\lambda$ has $k$ boxes.
  $\lambda\vdash k$ enumerates both the conjugacy classes of $S_k$ and its irreducible representations. 
 The symmetric group $S_k$ acts on the set $\{1,\ldots ,k\}$, and in turn, by leg permutation on
$(\mathbb{C}^n)^{\otimes k}$, which induces an algebra morphism
$\mathbb{C}[S_k]\to \M_n(\mathbb{C})^{\otimes k}$.
By Schur-Weyl duality, $\lambda$ also describes irreducible polynomial representations of the unitary 
group $\Un$ if its length is less than $n$ and in this context, $V_{\lambda}$ stands for the associated representation of the unitary group.
For a permutation $\sigma\in S_k$, we call $\#\sigma$ the number of cycles (or loops) in its cycle product decomposition
(counting fixed points). 
Consider the function 
$$G=\sum_{\sigma \in S_k} n^{\# \sigma}\lambda_{\sigma},$$
and its pseudo-inverse $W=G^{-1}=\sum_{\sigma \in S_k} w (\sigma)\lambda_{\sigma}$.
The following result was observed by the author and \'Sniady in \cite{MR2217291}
and it provides the link between representation theory and Weingarten calculus:

\begin{Th}
$G$ is positive in $\mathbb{C}[S_k]$. In addition, we have
$w (\sigma,\tau)=w(\tau\sigma^{-1})$, which we rename as $\Wg (n,\tau\sigma^{-1})$, 
and the following character expansion:
$$\Wg(n, \sigma)=\frac{1}{k!^2}\sum_{\lambda\vdash k} \frac{\chi_{\lambda}(e)^2\chi_{\lambda}(\sigma)}{dim V_{\lambda}}.$$
\end{Th}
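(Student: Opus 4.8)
The plan is to pass through the Wedderburn (Fourier) decomposition of the group algebra $\mathbb{C}[S_k]$, using the key structural observation that $G$ is a \emph{central} element. Indeed the coefficient $n^{\#\sigma}$ depends only on the cycle type of $\sigma$, hence only on its conjugacy class, so $G$ lies in the centre $Z(\mathbb{C}[S_k])$, on which the Fourier transform acts by scalars: writing $\mathbb{C}[S_k]\cong\bigoplus_{\lambda\vdash k}\mathrm{End}(V^{S_k}_\lambda)$ with $V^{S_k}_\lambda$ the irreducible $S_k$-module of shape $\lambda$, the element $G$ acts on the $\lambda$-block as a scalar $\omega_\lambda(G)$ determined by the trace identity $\omega_\lambda(G)\,\chi_\lambda(e)=\sum_{\sigma\in S_k} n^{\#\sigma}\chi_\lambda(\sigma)$.

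Next I would evaluate this trace via Schur--Weyl duality. The $S_k$-character of $(\mathbb{C}^n)^{\otimes k}$ under leg permutation, evaluated at $\sigma$, equals $n^{\#\sigma}$ (the permutation operator has $\#\sigma$ cycles, each contributing a factor $n$), while Schur--Weyl decomposes $(\mathbb{C}^n)^{\otimes k}\cong\bigoplus_{\lambda\vdash k}V^{S_k}_\lambda\otimes V_\lambda$, where $V_\lambda$ is the corresponding $\Un$-representation and $\dim V_\lambda$ vanishes precisely when $\ell(\lambda)>n$. Hence $n^{\#\sigma}=\sum_{\lambda}(\dim V_\lambda)\,\chi_\lambda(\sigma)$, and orthonormality of the (real) irreducible characters of $S_k$ yields $\sum_\sigma n^{\#\sigma}\chi_\lambda(\sigma)=k!\,\dim V_\lambda$, so that $\omega_\lambda(G)=k!\,\dim V_\lambda/\chi_\lambda(e)\ge 0$. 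This already gives positivity of $G$ in $\mathbb{C}[S_k]$; alternatively, positivity is immediate from Theorem~\ref{th-un-on}, since $\Gr=(n^{\mathrm{loops}(\pi,\pi')})$ is literally the Gram matrix of the vectors $E_\sigma$ and is therefore positive semidefinite, while being, up to relabelling, exactly the matrix of $G$ acting on $\mathbb{C}[S_k]$ by (left or right) convolution.

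It then remains to invert and read off coefficients. Since $G$ is central, its pseudo-inverse $W=G^{-1}$ is central as well, acting on the $\lambda$-block by $\omega_\lambda(W)=\chi_\lambda(e)/(k!\,\dim V_\lambda)$ on those $\lambda$ with $\dim V_\lambda\ne 0$ and by $0$ on the remaining blocks; in particular $W$, viewed as an operator on $\mathbb{C}[S_k]$, is again convolution by a class function $w$, which forces the matrix entries $w(\sigma,\tau)$ of $W$ relative to the generating family $\{E_\sigma\}$ to depend only on $\tau\sigma^{-1}$, justifying the notation $\Wg(n,\tau\sigma^{-1})$. Finally Fourier inversion on $S_k$ for the central element $W$, i.e. $w(\sigma)=\frac{1}{k!}\sum_{\lambda\vdash k}\chi_\lambda(e)\,\omega_\lambda(W)\,\chi_\lambda(\sigma)$ (using again that characters of $S_k$ are real), substitutes the scalars computed above to produce $\Wg(n,\sigma)=\frac{1}{k!^2}\sum_{\lambda\vdash k}\frac{\chi_\lambda(e)^2\chi_\lambda(\sigma)}{\dim V_\lambda}$, the sum being over $\lambda$ with $\ell(\lambda)\le n$ when $n<k$.

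The main obstacle, really the only delicate point, is the bookkeeping around non-invertibility of $\Gr$ when $n<k$: one must check that the pseudo-inverse is supported exactly on the blocks with $\ell(\lambda)\le n$, equivalently that the kernel of $G$ in the regular representation is $\bigoplus_{\ell(\lambda)>n}\mathrm{End}(V^{S_k}_\lambda)$. This is precisely where one invokes the equivalence $\omega_\lambda(G)=0\iff\dim V_\lambda=0\iff\ell(\lambda)>n$ coming from the Schur--Weyl computation above; once this is in place the rest is the standard dictionary between central elements of a group algebra, class functions, and the Fourier transform.
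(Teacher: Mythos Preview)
Your argument is correct and is really the same proof as the paper's, only with the character computation spelled out where the paper leaves it implicit. Both pass through the leg-permutation representation $\phi:\mathbb{C}[S_k]\to\M_n(\mathbb{C})^{\otimes k}$ and the centrality of $G$; the paper just packages your identity $n^{\#\sigma}=\sum_\lambda(\dim V_\lambda)\chi_\lambda(\sigma)$ as the trace relation $\Tr[\phi(A)]=\tau(GA)$ for the regular trace $\tau$.

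The one stylistic difference worth noting is in the positivity step. You deduce $G\ge 0$ by explicitly computing its eigenvalue $\omega_\lambda(G)=k!\dim V_\lambda/\chi_\lambda(e)$ on each isotypic block (or, alternatively, by recognising $\Gr$ as a Gram matrix). The paper instead argues more abstractly: since $\phi$ is a $*$-morphism and $\Tr$ is a positive trace, $A\mapsto\Tr[\phi(A)]=\tau(GA)$ is a positive linear functional on $\mathbb{C}[S_k]$, and faithfulness of $\tau$ then forces $G\ge 0$ without ever decomposing into irreducibles. Your route has the advantage that the explicit scalars $\omega_\lambda(G)$ are exactly what one inverts to obtain the character expansion, so a single computation handles both claims; the paper's route isolates positivity as a one-line conceptual fact and then appeals separately to ``a character formula'' for the rest. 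Either way, the bookkeeping you flag about the kernel of $G$ for $n<k$ (the blocks with $\ell(\lambda)>n$) is indeed the only subtlety, and your handling of it via $\omega_\lambda(G)=0\iff\dim V_\lambda=0$ is correct.
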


\begin{proof}
Consider the action of $S_k$ on $(\mathbb{C}^n)^{\otimes k}$ by leg permutation. It extends to a 
unital $*$-algebra morphism $\phi:\mathbb{C}[S_k]\to \M_n(\mathbb{C})^{\otimes k}$. 
By inspection, for $A\in \mathbb{C}[S_k]$, $\Tr [\phi (A)]=\tau (GA)$, where $\tau$ is the regular trace
$\tau (\lambda_g)=\delta_{g,e}$. The positivity of $\tau$ implies that of $G$ which proves positivity.
The remaining points follow from the fact that $G$ is central and by a character formula. 
\end{proof}

\subsection{Combinatorial formulations}
Let us write formally $n^{-k}G=\lambda_e+\sum_{\sigma \in S_k-\{e\}} n^{\# \sigma -k}\lambda_{\sigma}$.
It follows that as a power series in $n^{-1}$,
$$n^k W=\lambda_e+\sum_{p\ge 1} (-1)^p \Big(\sum_{\sigma \in S_k-\{e\}} n^{\# \sigma -k}\lambda_{\sigma}\Big)^p$$
Reading through the coefficients of this series gives a combinatorial formula for 
$\Wg$ in the unitary case. Such formulas were first found in  \cite{MR1959915}, and we refer to
\cite{2010.13661} for substantial generalizations. 
See also \cite{MR4011702} 
for other interpretations, as well as \cite{MR1411614}. 

However, this formula is signed, and therefore impractical for the quest of uniform asymptotics.  
In a series of works of Novak and coworkers in \cite{MR3095005,MR3033628,MR3248222,MR3010693}
came with an exciting solution to this problem which we describe below. 
It relies on  Jucys Murphy elements, which are the following elements of $\mathbb{C}[S_k]$:
$J_i=\sum_{j>i}\lambda_{(ij)}.$
The following important result was observed: 
$$G=(n+J_1)\ldots (n+J_{k-1})$$
This follows from the fact that every permutation $\sigma$ has a unique factorization 
as $$\sigma = (i_1 j_1)\ldots (i_l j_l)$$
with the property $i_p<j_p$ and $j_p<j_{p+1}$.

This prompts us to define $P(\sigma, l)$ to be the set  of solutions to the equation
$\sigma = (i_1j_1)\ldots (i_lj_l)$ with $i_p<j_p$, $j_p\le j_{p+1}$.
The number of solutions to this problem is related to Hurwitz numbers; for details, we refer for example 
to \cite{2010.13661} and the above references.
From this, we have the following theorem:
\begin{Th} 
For $\sigma \in S_{k}$, we have the  
expansion
\begin{equation} \label{eq:unitary-Wg-expansion1}
\Wg (n,\sigma )= n^{-k}  \sum_{l \ge 0} \# P(\sigma, l) (-n^{-1})^{l}.
\end{equation}
\end{Th}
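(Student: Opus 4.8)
The plan is to combine the representation-theoretic factorization $G = (n+J_1)\cdots(n+J_{k-1})$ with the definition of $W$ as the pseudo-inverse (in fact inverse, on the semisimple algebra $\mathbb{C}[S_k]$, once $n$ is large enough) of $G$. First I would invert the factored form term by term: since the Jucys--Murphy elements $J_1,\ldots,J_{k-1}$ generate a commutative subalgebra (the Gelfand--Tsetlin algebra) and each factor $n+J_i$ is invertible for $n$ large, we get
\begin{equation*}
W = G^{-1} = (n+J_{k-1})^{-1}\cdots(n+J_1)^{-1} = n^{-(k-1)}\prod_{i=1}^{k-1}\left(1 + \frac{J_i}{n}\right)^{-1}.
\end{equation*}
Because the $J_i$ do not individually commute with all of $\mathbb{C}[S_k]$ but the whole product $G$ is central, one must be slightly careful about the order; however, since we only ever expand into a power series in $n^{-1}$ with coefficients that are noncommutative polynomials in the $J_i$, the bookkeeping is purely formal and legitimate as an identity of power series in $n^{-1}$ with coefficients in $\mathbb{C}[S_k]$.

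Next I would expand each geometric series $(1+J_i/n)^{-1} = \sum_{m\ge 0}(-1)^m J_i^m n^{-m}$ and multiply the $k-1$ such series together, collecting the total power of $n^{-1}$. The coefficient of $(-1)^l n^{-l}$ in $n^{k-1}W = \prod_{i=1}^{k-1}(1+J_i/n)^{-1}$ is then
\begin{equation*}
\sum_{\substack{m_1,\ldots,m_{k-1}\ge 0\\ m_1+\cdots+m_{k-1}=l}} J_1^{m_1} J_2^{m_2}\cdots J_{k-1}^{m_{k-1}},
\end{equation*}
where the factors are written in the fixed increasing order of indices. Now I would expand each $J_i^{m_i} = \bigl(\sum_{j>i}\lambda_{(ij)}\bigr)^{m_i}$ into a sum of products of transpositions $\lambda_{(i\,j)}$ with the second entry strictly larger than the first; concatenating these across $i=1,\ldots,k-1$ produces exactly the words $(i_1 j_1)\cdots(i_l j_l)$ with $i_p<j_p$ and $j_p\le j_{p+1}$ (the weak monotonicity of the $j_p$ records the fact that we pass from the $J_i$-block to the $J_{i+1}$-block, and within a single block $J_i^{m_i}$ all transpositions share the same smaller index $i$ while their larger indices are unconstrained among themselves). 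Hence the coefficient of $\lambda_\sigma$ in the above sum is precisely $\#P(\sigma,l)$, and reading off the $\lambda_\sigma$-coefficient of $n^k W$ gives $\Wg(n,\sigma) = n^{-k}\sum_{l\ge 0}\#P(\sigma,l)(-n^{-1})^l$, which is \eqref{eq:unitary-Wg-expansion1}.

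The main obstacle, and the only place requiring genuine care rather than routine algebra, is justifying that the formal power series manipulation with noncommuting coefficients actually computes the honest inverse $W=G^{-1}$: one should either argue that for $n>k$ the series converges in the finite-dimensional algebra $\mathbb{C}[S_k]$ (each $\|J_i\|$ is bounded by $k-1$ in the regular representation, so the geometric series converge for $n\ge k$), and then that both sides agree as analytic functions of $n$ hence as the rational function $\Wg(n,\sigma)$, or else work purely in the ring $\mathbb{C}[S_k][[n^{-1}]]$ throughout and note the factorization $G=(n+J_1)\cdots(n+J_{k-1})$ already lives there with invertible leading term $n^{k-1}$. A secondary subtlety is the ordering convention: the factorization in the paper is stated as $\sigma=(i_1j_1)\cdots(i_lj_l)$ with $j_p\le j_{p+1}$, and one must check this matches the order in which the $J_i$ appear (smallest index first) — this is exactly the content of the uniqueness-of-factorization statement quoted in the excerpt, so it can be invoked directly rather than reproved.
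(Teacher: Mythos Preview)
Your approach is exactly the one the paper intends: the theorem is stated immediately after the Jucys--Murphy factorization with the words ``From this, we have the following theorem,'' and the argument is precisely to invert $G$ factor by factor and expand each $(1+J_i/n)^{-1}$ as a geometric series.

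There is, however, a genuine slip in your combinatorial identification. You correctly note that in a block $J_i^{m_i}$ (with the paper's convention $J_i=\sum_{j>i}\lambda_{(ij)}$) every transposition has \emph{smaller} entry equal to $i$, while the larger entry is unconstrained. But then concatenating $J_1^{m_1}J_2^{m_2}\cdots$ forces the \emph{smaller} entries $i_p$ to be weakly increasing, not the larger ones $j_p$: passing from the $J_i$-block to the $J_{i+1}$-block raises $i$, not $j$. So what you are actually counting is factorizations with $i_p\le i_{p+1}$, which is not the definition of $P(\sigma,l)$. (A quick check in $S_3$: the expansion produces $(132)=(1,3)(2,3)$, where $j_1=j_2=3$ is not strictly increasing but $i_1=1<i_2=2$ is.) The fix is to use the standard Jucys--Murphy elements $\tilde J_i=\sum_{j<i}\lambda_{(ji)}$, for which the fixed index is the \emph{larger} one; the same expansion then genuinely yields the condition $j_p\le j_{p+1}$. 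The paper's written definition of $J_i$ is in fact inconsistent with its own stated condition $j_p<j_{p+1}$, so this is partly a typo you inherited, but your sentence linking block-passage to monotonicity of $j_p$ is internally wrong regardless. A related off-by-one: with only $k-1$ factors you obtain $n^{k-1}W$, yet in your last line you silently switch to $n^kW$; the product must have $k$ factors (one of them trivial, since $\tilde J_1=0$) to match the leading term $n^k$ of $G$.
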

The first strategy to compute the Weingarten formula was intiated in \cite{MR471696}.
Let us outline it. We can write
$\Wg (n,\sigma )=\int u_{11}\ldots u_{kk}\overline{u_{1\sigma 1}\ldots u_{k\sigma k}}$. Indeed,  
when considering the integral on the right hand side in Theorems \ref{th-general} and \ref{th-un-on},
the only pairing appearing corresponds to $\Wg (n,\sigma )$.
Replacing the first row index of $u$ and
$\overline{u}$ by $i$ and summing over $i$, we are to evaluate
\begin{equation}
\begin{split}
\sum_{i=1}^n\int u_{i1}\ldots u_{kk}\overline{u_{i\sigma (1)}\ldots u_{k\sigma (k)}}\\
=\delta_{1\sigma (1)}\int u_{22}\ldots u_{kk}\overline{u_{2\sigma (2)}\ldots u_{k\sigma (k)}}
=n\Wg (n,\sigma )+\sum_{i=2}^l\Wg (n,(1 \, i)\sigma )
\end{split}
\end{equation}
where the first equality follows from orthogonality and the second from repeated 
uses of the Weingarten formula.
The second line provides an iterative technique to compute $\Wg (n,\sigma )$ numerically and
combinatorially. Historically, this is the idea of Weingarten, and in \cite{MR471696},
he proved that the collection of all relations obtained above determine uniquely 
$\Wg$ for $k$ fixed, $n$ large enough.  

In \cite{MR3680193}, we revisited his argument and figured out that these equations can
be interpreted as a fixed point problem and a path counting formula, both formally and numerically.
We got theoretical mileage from this approach and obtained new theoretical results, such as
\begin{Th} 
All unitary Weingarten functions and all their derivatives are monotone on $(k,\infty )$
\end{Th}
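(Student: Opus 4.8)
The plan is to treat the argument $n$ as a real variable $x$ ranging over $(k,\infty)$ and to extract everything from the monotone--factorization expansion \eqref{eq:unitary-Wg-expansion1}, which on this interval converges absolutely and represents $\Wg(x,\sigma)$. The decisive first step is a parity remark: a solution of $\sigma=(i_1j_1)\cdots(i_lj_l)$ writes $\sigma$ as a product of $l$ transpositions, so if $P(\sigma,l)\neq\emptyset$ then the signature $\varepsilon(\sigma)=(-1)^{k-\#\sigma}$ equals $(-1)^l$; equivalently $\#P(\sigma,l)=0$ whenever $(-1)^l\neq\varepsilon(\sigma)$. Consequently every nonzero term of \eqref{eq:unitary-Wg-expansion1} satisfies $(-1)^l\#P(\sigma,l)=\varepsilon(\sigma)\#P(\sigma,l)$, and therefore
\[
\varepsilon(\sigma)\,\Wg(x,\sigma)=\sum_{l\ge 0}\#P(\sigma,l)\,x^{-k-l},
\]
a power series in $1/x$ with \emph{non-negative} coefficients.

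Next I would invoke complete monotonicity. For every real $a>0$ and every integer $m\ge 0$ one has $(-1)^m\frac{d^m}{dx^m}x^{-a}=a(a+1)\cdots(a+m-1)\,x^{-a-m}\ge 0$ on $(0,\infty)$, so each monomial $x^{-a}$ is completely monotone there; moreover the series displayed above may be differentiated term by term throughout its interval of convergence. Taking a non-negative combination of the completely monotone functions $x^{-k-l}$ with weights $\#P(\sigma,l)$, one obtains, for every $m\ge 0$,
\[
(-1)^m\,\varepsilon(\sigma)\,\frac{d^m}{dx^m}\Wg(x,\sigma)=\sum_{l\ge 0}\#P(\sigma,l)\,(k+l)(k+l+1)\cdots(k+l+m-1)\,x^{-k-l-m}\ \ge\ 0 ,
\]
so $\varepsilon(\sigma)\,\Wg(\cdot,\sigma)$ is completely monotone on $(k,\infty)$.

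It then remains only to read off the statement. For each fixed $m\ge 0$ the above shows that $\frac{d^m}{dx^m}\Wg(x,\sigma)$ has the \emph{constant} sign $(-1)^m\varepsilon(\sigma)$ on all of $(k,\infty)$ (it is not identically zero, since $\#P(\sigma,k-\#\sigma)\ge 1$). Applying this with $m+1$ in place of $m$ shows that $\frac{d^m}{dx^m}\Wg(\cdot,\sigma)$ has a derivative of constant sign, hence is monotone on $(k,\infty)$ --- non-decreasing when $(-1)^{m+1}\varepsilon(\sigma)=1$ and non-increasing otherwise. As $\sigma\in S_k$ and $m\ge 0$ are arbitrary, this proves the theorem; in fact it proves the sharper assertion that $\varepsilon(\sigma)\Wg(\cdot,\sigma)$ is completely monotone, and since $\#P(\sigma,l)$ is bounded by the total number of length-$l$ monotone factorization sequences, whose generating function is $\prod_{i=1}^{k-1}(1-it)^{-1}$, the same argument in fact applies on the larger interval $(k-1,\infty)$.

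The point that makes this more than a formality --- and the only place where anything has to be noticed --- is the sign cancellation in the first step, which converts the a priori \emph{alternating} series \eqref{eq:unitary-Wg-expansion1} into one with non-negative coefficients. Neither the signed combinatorial expansion coming from $n^kW=\lambda_e+\sum_{p\ge1}(-1)^p(\cdots)^p$ nor the character expansion $\Wg(n,\sigma)=\tfrac{1}{k!^2}\sum_{\lambda\vdash k}\chi_\lambda(e)^2\chi_\lambda(\sigma)/\dim V_\lambda$ --- whose summands over $\lambda$ have no fixed sign, since $\chi_\lambda(\sigma)$ does not --- makes monotonicity visible; it is exactly the passage to the monotone Jucys--Murphy factorizations counted by $P(\sigma,l)$ that trivializes it.
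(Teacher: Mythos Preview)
Your argument is correct. The parity observation that $\#P(\sigma,l)=0$ unless $(-1)^l=\varepsilon(\sigma)$ is exactly what is needed; once made, the expansion \eqref{eq:unitary-Wg-expansion1} becomes a non-negative series in $x^{-1}$, and complete monotonicity of $\varepsilon(\sigma)\,\Wg(\cdot,\sigma)$ follows at once by termwise differentiation. Your bound $\#P(\sigma,l)\le [t^l]\prod_{i=1}^{k-1}(1-it)^{-1}$ is also correct and indeed pushes the domain out to $(k-1,\infty)$.

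Your route, however, is not the one the paper indicates. The survey does not prove the theorem here but attributes it to \cite{MR3680193}, and the surrounding text makes clear that the method there is \emph{not} the Jucys--Murphy expansion \eqref{eq:unitary-Wg-expansion1}: it is Weingarten's original orthogonality recursion (the displayed relation just above the theorem), reinterpreted as a fixed-point equation and a path-counting formula. That approach is less immediate for $\Un$ than yours, but it is the one that exports: the very next theorem on the $O_n^+$ Weingarten function is proved by the same orthogonality technique, precisely because --- as the paper stresses --- representation-theoretic and Jucys--Murphy tools do not yield tractable formulas in the quantum setting. Your proof, by contrast, is shorter, gives the sharper conclusion of complete monotonicity essentially for free, and extends to $(k-1,\infty)$, but it is tied to the unitary case through the factorization $G=(n+J_1)\cdots(n+J_{k-1})$ underlying \eqref{eq:unitary-Wg-expansion1}.
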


The unavoidability of 
Weingarten's historical argument becomes blatant when one studies 
 quantum Weingarten function. 
Partial results about their asymptotics were obtained in \cite{MR2776622},
however, the asymptotics were not optimal for all entries.
On the other hand, motivated by the study of planar algebras, Vaughan Jones asked us the following question: 
consider the canonical basis of the Temperley Lieb algebra $\TL_k(n)$, are the 
coefficients of the dual basis all non-zero when expressed in the canonical basis?
For notations, we refer to our paper \cite{MR3874001}.
One motivation for this question is that the dual element of the identity is a multiple
of the Jones-Wenzl projection. 

Observing that this question is equivalent, up to a global factor, to the problem
of computing the Weingarten function for $O_n^+$, and realizing that
representation theory did not give tractable formulas in this case,
we revisited the original idea of Weingarten and proved the following result, answering a series
of open questions of Jones:
\begin{Th}
The quantum $O_n^+$ Weingarten function is never zero on the non-critical interval $[2,\infty )$, and monotone. 
\end{Th}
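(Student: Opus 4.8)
The plan is to mimic Weingarten's original recursive strategy in the setting of $O_n^+$, as announced in the text. Recall that for $O_n^+$ one has $U=\overline U$, so it suffices to work with $U^{\otimes k}$, and by the earlier theorem the family $E_\pi$, $\pi\in NC_2(k)$, generates the image of $Z_{O_n^+}$. The relevant Gram matrix is the one coming from the Temperley--Lieb algebra $\TL_k(n)$: its entries are $n^{\mathrm{loops}(\pi,\pi')}$ for $\pi,\pi'\in NC_2(k)$, and the quantum Weingarten function is, up to a global power of $n$, the entry of the inverse Gram matrix, i.e.\ the coefficient of the dual basis element expanded in the canonical basis. So the statement to prove is: the entries of this inverse Gram matrix are all strictly nonzero and monotone in $n$ on $[2,\infty)$.

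The key steps, in order, are as follows. First, I would set up the analogue of Weingarten's recursion: fix the last (or first) through-string of the diagram and sum over a free index as in the classical computation on display in the excerpt, obtaining a linear system relating $\Wg_{O_n^+}(\pi)$ to $\Wg_{O_n^+}$ evaluated at diagrams with one fewer pair, with coefficients that are $1$, $n$, or associated to cups/caps. This realizes the full vector of Weingarten entries as the fixed point of an explicit affine contraction on a suitable function space, valid for $n$ in the non-critical range $[2,\infty)$ (here one uses that $\delta=n\ge 2$ keeps the Temperley--Lieb form nondegenerate, e.g.\ via the Jones--Wenzl/Markov trace positivity). Second, I would iterate the fixed-point map starting from a suitable seed and read off a path-counting (series) expansion of each entry: each iteration contributes terms indexed by sequences of elementary moves on non-crossing pairings, and — crucially — one arranges the expansion so that every contribution to a fixed entry carries the \emph{same} sign. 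This is the $O_n^+$ analogue of the monotone / unsigned Jucys--Murphy-type expansion referenced for the unitary case. Nonzeroness then follows because the leading (or any single) term is nonzero and nothing can cancel it; monotonicity (and monotonicity of derivatives, if one pushes further) follows by differentiating the manifestly sign-definite series term by term in $1/n$ and checking each derivative retains a fixed sign on $[2,\infty)$.

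The main obstacle I expect is establishing the \emph{sign-definiteness} of the path-counting expansion: in the unitary case this rests on the clean Jucys--Murphy factorization $G=(n+J_1)\cdots(n+J_{k-1})$, and the challenge is to find the right Temperley--Lieb substitute — presumably a factorization of the $\TL_k(n)$ Gram form into elementary non-negative pieces, or an equivalent combinatorial model of the recursion in which the interleaving constraints force all exponents of $-1/n$ in a given entry to have the same parity. A secondary technical point is controlling convergence of the iteration uniformly down to $n=2$ (the boundary of the non-critical interval), which I would handle by a norm estimate on the affine map showing the contraction constant stays below $1$ for all $n\ge 2$, so that the series converges and the term-by-term sign and monotonicity arguments are legitimate on the closed interval.
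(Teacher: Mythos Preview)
Your proposal is correct and follows essentially the same route the paper describes: revisit Weingarten's original orthogonality-relation recursion in the $O_n^+$/Temperley--Lieb setting, recast it as a fixed-point problem, and iterate to obtain a Laurent expansion of the free Weingarten function as a generating series of paths on a graph, from which nonvanishing and monotonicity on $[2,\infty)$ are read off directly from sign-definiteness of the coefficients. You have also correctly identified the crux --- the absence of a Jucys--Murphy factorization and the need to establish sign-definiteness by a genuinely Temperley--Lieb argument --- which is exactly where the work in \cite{MR3874001} lies.
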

Our proof provides explicit formulas for a Laurent expansion of the free $\Wg$ in the neighborhood of $n=\infty$,
as a generating series of paths on graphs.

\section{Asymptotics and properties of Weingarten functions}

In this section, we are interested in the following problem. For a given permutation $\sigma\in S_k$,
what is the behavior as $n\to\infty$ of
$\Wg (n, \sigma )$? This function is rational as soon as $n\ge k$, and 
even elementary observations about its asymptotics have non-trivial
applications in analysis. In the forthcoming subsections, we refine our
study of the asymptotics iteratively and derive new applications each time. 
Similar results have been obtained for most sequences of classical compact groups, but
we focus here mostly on $\Un$ and $\mathcal{O}_n$, and refer to the literature for other compact groups. 

\subsection{First order for identity Weingarten coefficients and Borel theorems}
Let us first setup notations related to \emph{non-commutative probability spaces} 
and of \emph{convergence in distribution} in a non-commutative sense.
A non-commutative probability space (NCPS) is a unital $*$-algebra $\A$ together with a state $\tau$ ($\tau : \A\to \mathbb{C}$ 
is linear, $\tau (1)=1$ and $\tau (xx^*)\ge 0$ for any $x$). 
In general we will assume \emph{traciality}: $\tau (ab)=\tau(ba)$ for all $a,b$.
 
 Assume we have a family of NCPS $(\A_n,\tau_n)$, a limiting object $(\A,\tau )$
 and a $d$-tuple $(x_n^1,\ldots ,x_n^d)\in \A_n^d$. We say that this $d$-tuple of
 \emph{non-commutative random variables converges in distribution} 
 to  $(x^1,\ldots ,x^d)\in \A^d$ iff for any sequence $ i_1,\ldots , i_k$ of indices
 in $\{1,\ldots , d\}$, 
 $$\tau_n (x_n^{i_1}\ldots x_n^{i_k})\to \tau (x^{i_1}\ldots x^{i_k})$$
 In the abelian case this corresponds to a convergence in moments (which is not in general the
 convergence in distribution), however in the non-commutative framework, it is 
usually called  convergence in non-commutative distribution, cf 
 \cite{MR1217253}.
The following result was proved in \cite{MR2408577} in the classical case and \cite{MR2341011} in the 
quantum case:
\begin{Th}
Consider a sequence of vectors $(A_1^n, \ldots , A_r^n)$ in $\M_n(\mathbb{R})$
such that the matrix $(\tr (A_iA_j^t))$ converges to $A$, and a $\mathcal{O}_n$-Haar distributed
random variable $U_n$. 
Then, as $n\to\infty$, the sequence random vectors 
$$(\Tr(A_1^nU_n ), \ldots, \Tr(A_r^nU_n ))$$
converges in moments (and in distribution) to a Gaussian real vector of covariance $A$.
If we assume instead $U_n$ to be in  $O_n^+$, then
$(\Tr(A_1^nU_n ), \ldots, \Tr(A_r^nU_n ))$
converges in non-commutative distribution to a free semicircular family of covariance $A$.
\end{Th}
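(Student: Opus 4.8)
The plan is to compute the mixed moments of the random vector $(\Tr(A_1^nU_n),\ldots,\Tr(A_r^nU_n))$ directly via Weingarten calculus and to show they converge, as $n\to\infty$, to the moments of the claimed limit object. The first step is to write, for a word $i_1,\ldots,i_m$ in $\{1,\ldots,r\}$,
$$
\E\,\Tr(A_{i_1}^nU_n)\cdots\Tr(A_{i_m}^nU_n)=\sum_{\text{indices}}(A_{i_1}^n)_{a_1b_1}\cdots(A_{i_m}^n)_{a_mb_m}\,\E\!\left[(U_n)_{b_1a_1}\cdots(U_n)_{b_ma_m}\right].
$$
For the orthogonal group, since $U_n$ is real we are in the setting of Theorem \ref{th-un-on} with $k'=0$ (only the $E_\pi$, $\pi\in P_2(m)$, appear), so by Theorems \ref{th-general} and \ref{th-un-on} the expectation is $\sum_{\pi,\pi'\in P_2(m)}\delta_{\pi,\,\underline b}\,\delta_{\pi',\,\underline a}\,\Wg_{\mathcal O_n}(\pi,\pi')$, where $\underline a,\underline b$ are the index tuples and $\Wg_{\mathcal O_n}$ is the pseudo-inverse of the Gram matrix $(n^{\mathrm{loops}(\pi,\pi')})$. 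Substituting back and carrying out the index sums turns $\delta_{\pi,\underline b}$ into a product of the $A_{i_j}^n$'s contracted along the pair partition $\pi$, and similarly for $\pi'$; concretely each term becomes a product over the blocks of $\pi$ and $\pi'$ of traces of words in the $A$'s and their transposes, which after using $\tr(A_iA_j^t)\to A_{ij}$ should be controlled.

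The heart of the argument is the asymptotics of $\Wg_{\mathcal O_n}$. The key fact — which follows from the structure of the Gram matrix $G=(n^{\mathrm{loops}(\pi,\pi')})$ whose diagonal is $n^{m/2}$ (since $\mathrm{loops}(\pi,\pi)=m/2$) and whose off-diagonal entries are lower order — is that $\Wg_{\mathcal O_n}(\pi,\pi')=n^{-m/2}\big(\delta_{\pi,\pi'}+O(n^{-1})\big)$. Thus only the diagonal pairings $\pi=\pi'$ survive to leading order. Counting powers of $n$: the free index sum over a partition $\pi$ with $m/2$ blocks contributes $n^{m/2}$ when $\pi=\pi'$, which exactly cancels the $n^{-m/2}$ from the Weingarten weight, so the surviving contribution is $\sum_{\pi\in P_2(m)}\prod_{\text{blocks }\{j,j'\}}A_{i_j i_{j'}}+o(1)$. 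This is precisely Wick's formula for the $m$-th mixed moment of a centered Gaussian vector with covariance $A$, which identifies the limit; convergence in distribution on top of convergence in moments follows because the Gaussian is moment-determinate. For the quantum case $O_n^+$, one repeats the computation using Theorem 2.10: the generating family is now indexed by $NC_2(m)\subset P_2(m)$, and the analogous Weingarten estimate $\Wg_{O_n^+}(\pi,\pi')=n^{-m/2}(\delta_{\pi,\pi'}+O(n^{-1}))$ holds, so the surviving sum is over \emph{non-crossing} pair partitions only — which is exactly the moment–cumulant formula for a free semicircular family with covariance $A$, again determinate by its moments.

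The main obstacle is making the two Weingarten estimates rigorous and uniform enough to justify discarding the off-diagonal and higher-order terms. One must check that the number of pairings is fixed (it is $m!!$, independent of $n$), that each discarded term really is $O(n^{-1})$ relative to the main term after the index sums — this requires a genus-type bookkeeping argument showing that whenever $\pi\neq\pi'$ the loss in the Weingarten weight is not compensated by the index count — and, in the $O_n^+$ case, that the contractions produce the correct free cumulant structure (the subtlety being that $U=\overline U$ forces attention to transposes, and one should verify no spurious crossing terms reappear). The cleanest route is to invert $G$ perturbatively as $G=n^{m/2}(I+N)$ with $\|N\|_{\mathrm{op}}=O(n^{-1})$ on the relevant (finite-dimensional) space, giving $\Wg=n^{-m/2}\sum_{p\ge0}(-N)^p$, and then bound the index sums termwise; both the classical bound (appearing implicitly in \cite{MR2217291,MR2408577}) and the quantum one (\cite{MR2341011}) are available, so this step is technical rather than conceptual.
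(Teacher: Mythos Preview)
Your proposal is correct and follows essentially the same approach as the paper: the paper's argument (stated there in three sentences) is precisely that $\Gr = n^{m/2}\cdot 1_l(1+O(n^{-1}))$, hence $W = n^{-m/2}\cdot 1_l(1+O(n^{-1}))$, so only diagonal Weingarten entries contribute asymptotically and one concludes with the classical (resp.\ free) Wick theorem. You have correctly fleshed out the details, including the perturbative inversion of $\Gr$ and the identification of the surviving terms with the Wick sum over $P_2(m)$ (resp.\ $NC_2(m)$).
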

The proof relies on two ingredients. 
Firstly, for all examples considered so far, 
$\Gr = n^k \cdot 1_l (1+ O(n^{-1}))$, which implies that 
$W=\Gr^{-1}=n^{-k}1_l(1+O(n^{-1}))$. By inspection, it turns out that in the above theorem, the only
entries of $W$ that contribute asymptotically are the diagonal ones,
and one can conclude with the classical (resp. the free) Wick theorem.

\subsection{Other leading orders for Weingarten coefficients}
The asymptotics obtained in the previous section are sharp only for the diagonal coefficients. However, they already yield non-trivial limit theorems. For more refined theorems, it is, however, 
necessary to obtain sharp asymptotics for all Weingarten coefficients. 
In the case of $\Un$, sharp asymptotics can be deduced from the following 
\begin{Th} 
In the case of the full cycle in $S_k$, we have the following explicit formula:
$$\Wg (n, (1\cdots k))=\frac{(-1)^{k+1}c_k}{(n-k+1)\ldots (n+k-1)},$$
where $c_k=(k+1)^{-1}{2k \choose k}$ is the Catalan number. 
In addition, $\Wg$ is almost multiplicative in the following sense:
if $\sigma$ is a disjoint product of two permutations $\sigma = \sigma_1\sqcup \sigma_2$
then 
$$\Wg (n, \sigma)=\Wg (n, \sigma_1)\Wg (n, \sigma_2)(1+O(n^{-2}))$$
\end{Th}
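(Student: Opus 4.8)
The statement splits into the closed form for a single $k$-cycle and the approximate multiplicativity of $\Wg$, and I would treat them in that order.

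\textbf{The full-cycle formula.} The plan is to evaluate the character expansion $\Wg(n,\sigma)=\frac{1}{k!^2}\sum_{\lambda\vdash k}\chi_\lambda(e)^2\chi_\lambda(\sigma)/\dim V_\lambda$ at $\sigma=(1\,2\cdots k)$ and to use that a $k$-cycle kills almost every term. By the Murnaghan--Nakayama rule the character of a $k$-cycle is obtained by stripping a single border strip of size $k$ off $\lambda$, so it vanishes unless $\lambda$ is a hook $\lambda=(k-r,1^r)$ with $0\le r\le k-1$, and then $\chi_\lambda((1\cdots k))=(-1)^r$. For such a hook one has $\chi_\lambda(e)=f^\lambda=\binom{k-1}{r}$ and, by the hook--content formula, $\dim V_\lambda=s_\lambda(1^n)=\dfrac{(n-r)(n-r+1)\cdots(n+k-1-r)}{k\,(k-1-r)!\,r!}$. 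Plugging these in and using $\binom{k-1}{r}^2(k-1-r)!\,r!=(k-1)!\binom{k-1}{r}$, the prefactor collapses and one gets
\begin{equation*}
\Wg(n,(1\cdots k))=\frac{1}{k!}\sum_{r=0}^{k-1}\frac{(-1)^r\binom{k-1}{r}}{(n-r)(n-r+1)\cdots(n+k-1-r)}.
\end{equation*}
It then remains to recognise this sum as $(-1)^{k+1}c_k/\big((n-k+1)\cdots(n+k-1)\big)$. Every denominator on the left divides the product $(n-k+1)\cdots(n+k-1)$ of $2k-1$ consecutive factors, so after clearing denominators the left side becomes $Q(n)/k!$ for an explicit polynomial $Q$ of degree at most $k-1$, and the claim is that $Q$ is the constant $(-1)^{k+1}c_k\,k!$. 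I would prove this by a residue comparison: both $\Wg(n,(1\cdots k))$ and the claimed rational function have at worst simple poles, located among $n\in\{-(k-1),\dots,k-1\}$, and both vanish at $n=\infty$; hence it suffices to check that their residues agree at each of these $2k-1$ points, which is a finite Vandermonde-type (equivalently ${}_3F_2$) binomial identity whose common value is the Catalan number $c_k$. This residue identity is the one genuinely computational point of Part 1; an equivalent route assembles the family over all $k$ into a generating function and matches it with the Catalan series $\frac{1-\sqrt{1-4z}}{2z}$.

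\textbf{Approximate multiplicativity.} Write $\sigma=\sigma_1\sqcup\sigma_2$ with $\sigma_i$ supported on disjoint blocks of sizes $k_i$, $k=k_1+k_2$, so $\#\sigma=\#\sigma_1+\#\sigma_2$. The crucial preliminary is a parity symmetry $\Wg(-n,\sigma)=(-1)^{\#\sigma}\Wg(n,\sigma)$, which I would again extract from the character expansion: transposing a Young diagram negates all its contents and fixes all its hook lengths, so $s_\lambda(1^{-n})=(-1)^k s_{\lambda'}(1^n)$; combining this with $f^{\lambda'}=f^\lambda$, with $\chi_{\lambda'}(\sigma)=\varepsilon(\sigma)\chi_\lambda(\sigma)$, and with $\varepsilon(\sigma)=(-1)^{k-\#\sigma}$ gives the claimed relation. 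Consequently the Laurent expansion of $\Wg(n,\sigma)$ at $n=\infty$ carries only powers $n^{-m}$ with $m\equiv\#\sigma\pmod 2$, so $\Wg(n,\sigma)=n^{\#\sigma-2k}\big(L(\sigma)+O(n^{-2})\big)$, and likewise for $\sigma_1$ and $\sigma_2$. Since $\#\sigma-2k=(\#\sigma_1-2k_1)+(\#\sigma_2-2k_2)$, the two sides of the desired identity already carry the same power of $n$ and the same parity of corrections, so everything reduces to the multiplicativity $L(\sigma_1\sqcup\sigma_2)=L(\sigma_1)L(\sigma_2)$ of the leading coefficient together with $L(\sigma_i)\ne 0$. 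This is the (known) first-order asymptotics of $\Wg$; a self-contained proof runs through the monotone-factorization expansion $\Wg(n,\sigma)=n^{-k}\sum_{l\ge 0}\#P(\sigma,l)(-n^{-1})^l$: the minimal $l$ with $P(\sigma,l)\ne\emptyset$ is $k-\#\sigma$, and a geodesic transposition factorization of $\sigma_1\sqcup\sigma_2$ never uses a transposition straddling the two blocks, so each minimal monotone factorization of $\sigma$ is a shuffle of minimal monotone factorizations of $\sigma_1$ and $\sigma_2$, the shuffle being unique because the two blocks are comparable subsets of $\{1,\dots,k\}$. Hence $\#P(\sigma,k-\#\sigma)=\#P(\sigma_1,k_1-\#\sigma_1)\,\#P(\sigma_2,k_2-\#\sigma_2)$, which is nonzero and multiplicative; matching leading coefficients and invoking the parity symmetry for $\sigma,\sigma_1,\sigma_2$ then yields $\Wg(n,\sigma)=\Wg(n,\sigma_1)\Wg(n,\sigma_2)\big(1+O(n^{-2})\big)$.

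\textbf{Main obstacle.} For the closed form the only real work is the terminal binomial/residue identity collapsing the hook sum to a single fraction with Catalan numerator; all the preceding steps (Murnaghan--Nakayama, hook--content, cancellation of the $k!^2$) are bookkeeping. For the multiplicativity the conceptual crux is recognising that the $O(n^{-1})$ correction is forbidden by the $n\mapsto -n$ parity symmetry, after which only the standard multiplicativity of the leading term remains.
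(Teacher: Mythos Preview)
The paper is a survey and does not supply a proof of this theorem; the statement is given with implicit reference to the literature (the full-cycle formula appears in \cite{MR1959915,MR2217291}, and the approximate multiplicativity is tied to Biane's M\"obius function \cite{MR1644993}). So there is no line-by-line comparison to make, but your proposal can be judged against the tools the paper does set up.

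Both parts of your argument are correct and use exactly the machinery the survey provides. For the cycle formula, reducing the character expansion to hooks via Murnaghan--Nakayama and inserting the hook and hook--content data is the standard route, and the intermediate expression you derive is right. The residual identity you flag as ``the one genuinely computational point'' is in fact a one-liner rather than a residue/${}_3F_2$ computation: the sum $\sum_{r=0}^{k-1}(-1)^r\binom{k-1}{r}\big[(n-r)\cdots(n+k-1-r)\big]^{-1}$ is the $(k{-}1)$-st backward finite difference of $g_k(n)=\big[n(n+1)\cdots(n+k-1)\big]^{-1}$, and since $g_m(n)-g_m(n-1)=-m\,g_{m+1}(n-1)$ one iterates to $(-1)^{k-1}\tfrac{(2k-2)!}{(k-1)!}\big[(n-k+1)\cdots(n+k-1)\big]^{-1}$ directly. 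Note that this produces the Catalan number $C_{k-1}=\frac{1}{k}\binom{2k-2}{k-1}$; the paper's $c_k=(k+1)^{-1}\binom{2k}{k}$ is an indexing slip, as one already sees at $k=2$, where $\Wg(n,(12))=-1/\big(n(n^2-1)\big)$.

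For the approximate multiplicativity your argument is clean. The parity $\Wg(-n,\sigma)=(-1)^{\#\sigma}\Wg(n,\sigma)$, obtained from $\lambda\leftrightarrow\lambda'$ in the character sum together with $s_\lambda(1^{-n})=(-1)^k s_{\lambda'}(1^n)$ and $\chi_{\lambda'}=\varepsilon\,\chi_\lambda$, is correct and is exactly what kills the would-be $O(n^{-1})$ term. The multiplicativity of the leading coefficient via minimal monotone factorizations is also correct; just make explicit that, $\Wg$ being a class function, you may relabel so that the supports of $\sigma_1,\sigma_2$ are the consecutive intervals $\{1,\dots,k_1\}$ and $\{k_1{+}1,\dots,k\}$, which is what makes the ``unique shuffle'' step go through.
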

This result defines recursively a function $\mathrm{Moeb}:\sqcup_{k\ge 1} S_k \mapsto \mathbb{Z}-\{0\}$ satisfying
$$\Wg (n, \sigma)=n^{-k-|\sigma |} \mathrm{Moeb}(\sigma)(1+O(n^{-2})).$$
This function was actually already introduced by Biane in \cite{MR1644993}, and
it is closely related to Speicher's non-crossing M\"obius function on the incidence algebra
of the lattice of non-crossing partitions -- see e.g. \cite{MR2266879}.
Similar results are available for the orthogonal and symplectic group; we refer to \cite{MR2485426}.
Finally, let us mention that the asymptotic Weingarten function for the unitary group is the object of intense study; see for example \cite{MR4011702,2107.10252}.

\subsection{Classical Asmptotic freeness}
Weingarten calculus allows answering the following
\begin{Ques}\label{main-question}
Given two families $(A_i^{(n)})_{i\in I}$ and $(B_j^{(n)})_{j\in J}$ of matrices in $\M_n(\mathbb{C})$, what is the joint behavior of
$(A_i^{(n)})_{i\in I}\sqcup (U_nB_j^{(n)}U_n^*)_{j\in J}$, where $U_n$ is invariant according to the Haar measure on
$\Un$?
\end{Ques}
The notion of behavior has to be clarified, and it will be refined at the same time as we refine 
our estimates of the Weingarten function. 
For now, we assume that $(A_i^{(n)})_{i\in I}$ and $(B_j^{(n)})_{j\in J}$ have asymptotic moments, namely,
for any sequence $i_1, \ldots , i_l$  
$$\tr A_{i_1}^{(n)}\ldots A_{i_l}^{(n)}$$
admits a finite limit, and likewise for $(B_j^{(n)})_{j\in J}$ (note that our standing notation is  $\tr=n^{-1}\Tr$). 
In this specific context, the question becomes:
\begin{Ques}\label{ques-asymptotic moments}
Does the enlarged family
$(A_i^{(n)})_{i\in I}\sqcup (U_nB_j^{(n)}U_n^*)_{j\in J}$ have asymptotic moments?
\end{Ques}
Let us note that
since the moments are random, the question admits variants, namely, 
does the enlarged family have asymptotic moments \emph{in expectation, almost surely}?
The answer turns out to be \emph{yes} -- irrespective of the variant chosen --, and the above asymptotics allow us to deduce the joint behavior of random matrices in the large dimension.
We recall that a family of unital $*$-subalgebras $\A_i, i\in I$ of a NCPS $(\A, \tau )$ is \emph{free} iff
for any 
$l\in \mathbb{N}_*, i_1, \ldots ,i_l\in I, i_1\ne i_2, \ldots, i_{l-1}\ne i_l, \tau (x_1\ldots x_l)=0$
as soon as: (i) $\tau (x_j)=0$, and (ii) $x_j\in A_{i_j}$. 
Asymptotic freeness holds when a family has a limit distribution, and  the limiting distribution generates
free $*$-subalgebras. 

\begin{Th}
The answer to Question \ref{ques-asymptotic moments} is yes. The limit of the union is determined by
the relation of asymptotic freeness, and the convergence is almost sure. 
\end{Th}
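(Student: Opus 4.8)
The plan is to use the Weingarten integration formula (Theorem~\ref{th-general}, specialized to $\Un$ via Theorem~\ref{th-un-on}) to express mixed moments of the enlarged family and then extract the $n\to\infty$ limit using the asymptotics $\Wg(n,\sigma)=n^{-k-|\sigma|}\mathrm{Moeb}(\sigma)(1+O(n^{-2}))$. Concretely, a general mixed moment of $(A_i^{(n)})_{i\in I}\sqcup(U_nB_j^{(n)}U_n^*)_{j\in J}$ is a trace of an alternating word; grouping the $U_n$'s and $U_n^*$'s and applying the Weingarten formula turns the expectation $\E[\tr(\cdots)]$ into a sum over pairs of permutations $(\sigma,\tau)\in S_k\times S_k$ of a product of the form $n^{-1}\cdot n^{\#(\text{loops depending on }\sigma)}\,\tau[B\text{-word indexed by }\tau]\cdot\tau[A\text{-word indexed by }\tau^{-1}\sigma\text{ or similar}]\cdot\Wg(n,\sigma\tau^{-1})$, where the $A$- and $B$-traces are themselves products of normalized traces of subwords determined by the cycle structures. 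The first step is to write this sum out carefully and identify the power of $n$ attached to each term.

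The second step is the power-counting/genus argument: one checks that the exponent of $n$ in each $(\sigma,\tau)$-term is $\le 0$, with equality exactly for the terms where the permutations are ``non-crossing'' relative to the cyclic word structure — this is the standard observation that $\#\alpha+\#\alpha^{-1}\gamma\le k+\#\gamma$ with a defect measuring non-planarity, combined with $\Wg(n,\sigma\tau^{-1})$ contributing $n^{-k-|\sigma\tau^{-1}|}$. The surviving terms are precisely those for which the $A$-part and $B$-part traces ``decouple'' along a non-crossing partition, and the $\mathrm{Moeb}$ factor supplies exactly the coefficients in Speicher's moment–cumulant / free-convolution machinery. One then recognizes the resulting limit formula as the one characterizing freeness of the limit of $(A_i^{(n)})$ from the limit of $(B_j^{(n)})$ (after the unitary conjugation has ``randomized'' the relative position): checking that the centered alternating moments vanish reduces to the combinatorial identity that the only non-crossing pairing contribution to a trace of centered alternating terms is empty, hence $0$. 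This establishes convergence in expectation to the free product.

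The third step upgrades almost-everywhere: one estimates the variance $\E[|\tr(W)|^2]-|\E[\tr(W)]|^2$ for a fixed word $W$ by applying the Weingarten formula to the second moment (now a sum over $S_{2k}\times S_{2k}$) and showing the connected/crossing contributions carry an extra factor $O(n^{-2})$. Summability of $\sum_n n^{-2}<\infty$ plus Borel–Cantelli then gives almost sure convergence of each fixed moment; since there are countably many words, a union bound yields almost sure convergence of the whole noncommutative distribution.

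I expect the main obstacle to be the bookkeeping in the genus/power-counting step: one has to track carefully how the cyclic structure of the test word interacts with the two permutations $\sigma,\tau$ and with the loop-count exponent coming from $\Gr$, and to verify that the leading terms organize themselves exactly into the non-crossing sum defining the free product rather than into something merely resembling it. Making the identification of the limit with ``$\mathrm{Moeb}$ = non-crossing Möbius function'' precise — i.e. quoting the Speicher combinatorics correctly so that the leading-order Weingarten sum literally \emph{is} the free-convolution formula — is where the real content lies; the variance estimate and the Borel–Cantelli passage are then routine given the $O(n^{-2})$ control already recorded above for $\Wg$ and for $\Gr$.
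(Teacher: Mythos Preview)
Your proposal is correct and follows essentially the same route as the paper, which only records a one-line proof sketch (``calculating moments together with our knowledge of the asymptotics of the Weingarten function'') and elsewhere (Section~4.1) points to the $O(n^{-2})$ error and Borel--Cantelli for the almost-sure upgrade. Your three steps --- Weingarten expansion of mixed moments, the genus/power-counting identification of the surviving non-crossing terms with the free-product moment formula via $\mathrm{Moeb}$, and the variance/Borel--Cantelli argument --- are exactly the intended content behind that sketch.
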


The proof relies on calculating moments together with our knowledge of the asymptotics of the Weingarten function.
In the following theorem, we observe that different types of `asymptotic behavior', such as the
existence of a limiting point spectrum, are also preserved
under the enlargement of the family.
The theorem below is a particular case of a result to be found in  \cite{MR3830802}:
\begin{Th}
Let $\lambda_{i,n}$ be sequences of complex numbers such that $\lim_n \lambda_{i,n}=0$.
Let $\Lambda_{i,n}=diag (\lambda_{i,1},\ldots , \lambda_{i,n})$ and $A_{j,n}$ be random matrices
with the property that (i)  $(A_{j,n})_j$ converges in NC distribution as $n\to\infty$ and 
(ii) $(UA_{j,n}U^*)_j$ has the same distribution as $(A_{j,n})_j$ as a $d$-tuple of random matrices. 
Let $P$ be a non-commutative polynomial.
Then the eigenvalues of $P(\Lambda_{i,n},A_{j,n})$ converge almost surely. 
\end{Th}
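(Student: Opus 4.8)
The plan is to reduce the convergence of the eigenvalues of $P(\Lambda_{i,n},A_{j,n})$ to a statement about a family of random matrices that is \emph{bi-unitarily invariant} and then exploit the vanishing of the $\lambda_{i,n}$ to show that, asymptotically, the $\Lambda_{i,n}$ decouple from the $A_{j,n}$ in a way that is compatible with convergence of the point spectrum. First I would observe that by hypothesis (ii), the distribution of $(A_{j,n})_j$ is invariant under $A_{j,n}\mapsto U A_{j,n}U^*$ for $U$ Haar on $\Un$; hence, introducing an independent Haar unitary $U_n$, we may replace $(A_{j,n})_j$ by $(U_n A_{j,n}U_n^*)_j$ without changing anything. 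This puts us exactly in the setting of Question~\ref{ques-asymptotic moments}, with the family $(\Lambda_{i,n})_i$ playing the role of the $(A_i^{(n)})$ and $(U_n A_{j,n}U_n^*)_j$ the role of $(U_nB_j^{(n)}U_n^*)$: by the asymptotic freeness theorem above, the joint NC distribution of $(\Lambda_{i,n})_i\sqcup(U_nA_{j,n}U_n^*)_j$ converges almost surely, and the limit is the free product of the limit of the $A_{j,n}$ with the limit of the $\Lambda_{i,n}$ — which, since $\lambda_{i,n}\to 0$, is just the family of zero elements. So in the limiting NC probability space, $P(\Lambda_i,A_j)=P(0,\ldots,0,A_j)$, a polynomial in the limiting $A_j$ alone.

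Next I would upgrade this convergence in NC distribution to convergence of the actual eigenvalue \emph{counting measures}, and then to convergence of the eigenvalues as a set (point spectrum). For convergence of empirical spectral distributions one wants a self-adjoint model, so I would apply the standard linearization / Hermitization trick: replace $P(\Lambda_{i,n},A_{j,n})$ by $\begin{pmatrix}0 & P \\ P^* & 0\end{pmatrix}$, or more efficiently by a linear pencil with matrix coefficients, so that moment convergence plus a norm bound gives weak convergence of spectral measures almost surely via the method of moments (using that the $A_{j,n}$ are bounded in operator norm — this should be part of, or follow from, the assumption that $(A_{j,n})_j$ converges in NC distribution, possibly strongly). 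The delicate point is going from convergence of the spectral \emph{distribution} to convergence of \emph{eigenvalues}: the eigenvalues of $P(\Lambda_{i,n},A_{j,n})$ that stay bounded away from the spectrum of $P(0,A_j)$ must be controlled. Here is where the hypothesis $\lambda_{i,n}\to 0$ does real work: one treats $\Lambda_{i,n}$ as a vanishing perturbation and uses a resolvent/Weyl-type estimate — writing $P(\Lambda_{i,n},A_{j,n}) = P(0,A_{j,n}) + R_n$ with $\|R_n\|\to 0$ in a suitable sense on the relevant subspaces — combined with strong convergence (convergence of the operator norm, not just all moments) of $(U_nA_{j,n}U_n^*)_j$ to the limiting family, to preclude outlier eigenvalues. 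This strong-convergence input is the analytic heart and is exactly the type of statement that the uniform/functional-analytic refinements of Weingarten calculus alluded to later in the paper supply.

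I expect the main obstacle to be precisely this last step: controlling individual eigenvalues (especially possible outliers escaping the bulk) rather than the bulk spectral measure. Convergence in NC distribution and even strong convergence of the $A_{j,n}$ alone does not immediately tell us that $P(\Lambda_{i,n},A_{j,n})$ has no spurious eigenvalues created by the interaction of the small-but-nonzero $\Lambda_{i,n}$ with $A_{j,n}$; one must quantify how the eigenvalues of a polynomial in these matrices depend on the $\lambda_{i,n}$, uniformly in $n$. The natural tool is a resolvent expansion at a spectral parameter $z$ outside $\mathrm{spec}\,P(0,A_j)$, bounding $\|(z - P(\Lambda_{i,n},A_{j,n}))^{-1}\|$ by perturbing off $\|(z - P(0,A_{j,n}))^{-1}\|$, the latter being controlled by strong convergence; closing this estimate — including handling the genuinely random $A_{j,n}$ and passing from "in expectation" bounds to almost sure ones, e.g. via a concentration argument or Borel–Cantelli — is where the technical weight of the argument lies, and where the cited paper \cite{MR3830802} presumably does the careful work.
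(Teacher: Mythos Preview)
There is a genuine gap stemming from a misreading of the hypothesis. The condition $\lim_n\lambda_{i,n}=0$ says that for each fixed $i$ the \emph{sequence} $(\lambda_{i,k})_{k\ge 1}$ tends to zero; the matrix $\Lambda_{i,n}=\mathrm{diag}(\lambda_{i,1},\ldots,\lambda_{i,n})$ carries the \emph{first $n$ terms} of that sequence on its diagonal, so $\|\Lambda_{i,n}\|_\infty=\max_{1\le k\le n}|\lambda_{i,k}|\ge |\lambda_{i,1}|$, which is a fixed nonzero number. Thus $\Lambda_{i,n}$ is \emph{not} a vanishing perturbation in operator norm, and your decomposition $P(\Lambda_{i,n},A_{j,n})=P(0,A_{j,n})+R_n$ with $\|R_n\|\to 0$ is simply false. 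What does vanish is the \emph{normalized} trace of any power of $\Lambda_{i,n}$; so in the NC-distribution sense (with $\tr=n^{-1}\Tr$) the $\Lambda_{i,n}$ indeed look like $0$, but that only tells you that the empirical spectral measure of $P(\Lambda_{i,n},A_{j,n})$ converges to that of $P(0,A_j)$. It says nothing about individual eigenvalues, and in particular the limiting eigenvalues are \emph{not} those of $P(0,A_j)$: the whole content of the theorem is that $P(\Lambda_{i,n},A_{j,n})$ has a discrete set of eigenvalues (outside the bulk) that converge one by one to a deterministic pure point spectrum depending genuinely on the fixed numbers $\lambda_{i,k}$.

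The paper's approach is accordingly quite different from yours. One works with the \emph{non-normalized} trace $\Tr$, under which the $\Lambda_{i,n}$ behave like compact (indeed trace-class type) operators with a fixed, non-trivial limit. Using unitary invariance one expands $\E\,\Tr\big(P(\Lambda_{i,n},U_nA_{j,n}U_n^*)^k\big)$ by the Weingarten formula; the asymptotics of $\Wg$ then force a specific combinatorial pairing between the $\Lambda$-legs and the $A$-legs, and the surviving terms organize into what the paper calls \emph{cyclic monotone} convergence --- a new independence relation tailored to the compact/full dichotomy. Strong convergence and resolvent perturbation estimates play no role here; the argument is a moment computation (plus concentration for the almost sure statement), but at the level of $\Tr$ rather than $\tr$, and the limit is a genuinely new object rather than $P(0,A_j)$.
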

The proof is also based on Weingarten calculus and moment formula. The limiting distribution 
is of a new type -- involving pure point spectrum -- and we call it \emph{cyclic monotone convergence}.

\subsection{Quantum Asymptotic freeness}

Finally, let us discuss another seemingly completely unrelated application to asymptotic representation theory.
The idea is to replace classical randomness with quantum randomness.
To keep the exposition simple, we stick to the case of the unitary group, although more general results are true for more general Lie groups, see \cite{MR2485426}.
Call $E_{ij}$ the canonical matrix entries of $\M_n(\mathbb{C})$, and $e_{ij}$ the generators of the enveloping Lie algebra $\mathfrak{U}(GL_n(\mathbb{C}))$
 of $GL_n(\mathbb{C})$, namely, the unital $*$-algebra generated by $e_{ij}$ and the relations $e_{ij}^*=e_{ji}$ and
$[e_{ij},e_{kl}]=\delta_{jk}e_{il}-\delta_{il}e_{kj}$.
The map $E_{ij}\to e_{ij}$ can be factored through all Lie algebra representations of $\Un$, and we are interested in the
following variants of its Choi matrix
$$A_n^{(1)}=\sum_{ij}E_{ij}\otimes e_{ij}\otimes 1 \,\, ,\,\,
A_n^{(2)}=\sum_{ij}E_{ij}\otimes 1\otimes e_{ij}
\in \M_n(\mathbb{C})\otimes \mathfrak{U}(GL_n(\mathbb{C}))^{\otimes 2}.$$
In \cite{MR3816511}, thanks -- among others -- to asymptotics of Weingarten functions, we proved the following, extending considerably the results of  \cite{MR1317523}.
\begin{Th}
For each $n$, take $\lambda_n,\mu_n$ two Young diagrams corresponding to a
polynomial representations $V_{\lambda_n},V_{\mu_n}$ of $GL_n(\mathbb{C})$. Assume that both dimensions
 tend to infinity as $n\to\infty$ and consider the traces on 
$\chi_{\lambda_n},\chi_{\mu_n}$ on $\mathfrak{U}(GL_n(\mathbb{C}))$.
Assume that $A_n$ converges in non-commutative distribution in Voiculescu's sense both
for $\tr\otimes \chi_{\lambda_n}$ and $\tr\otimes \chi_{\mu_n}$.
Then $A_m^{(1)},A_n^{(2)}$ are asymptotically free with respect to 
$\tr\otimes \chi_{\lambda_n}\otimes\chi_{\mu_n}$.
\end{Th}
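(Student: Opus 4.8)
The plan is to reduce the asymptotic freeness of $A_n^{(1)}$ and $A_n^{(2)}$ with respect to $\tr\otimes\chi_{\lambda_n}\otimes\chi_{\mu_n}$ to a Weingarten moment computation on $\Un$. First I would recall that a polynomial representation $V_{\lambda_n}$ of $GL_n(\mathbb{C})$ integrates to a representation of $\Un$, and that the normalized character $\chi_{\lambda_n}$ extends the Haar state in the sense that it is given by integrating a matrix coefficient against $\mu_{\Un}$; more precisely, by Schur--Weyl duality $V_{\lambda_n}$ sits inside $(\mathbb{C}^n)^{\otimes k}$ for suitable $k$, and evaluating $\chi_{\lambda_n}$ on a word in the $e_{ij}$ amounts, via the morphism $E_{ij}\mapsto e_{ij}$, to a Haar integral over products and conjugates of matrix entries $u_{ij}$, which Theorem~\ref{th-general} and Theorem~\ref{th-un-on} turn into a sum over pairings governed by $\Wg(n,\cdot)$. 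The key structural point is that the Choi matrix $A_n=\sum_{ij}E_{ij}\otimes e_{ij}$ is, up to the identification above, nothing but the ``abstract'' version of a Haar-random unitary conjugation: averaging a fixed matrix by $U_n\in\Un$ and reading off joint moments under $\tr\otimes\chi_{\lambda_n}$ is the same data as reading joint moments of $A_n$, so the hypothesis that $A_n$ converges in Voiculescu's sense for $\tr\otimes\chi_{\lambda_n}$ is the abstract avatar of ``the conjugated family has a limiting distribution.''

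Next I would write out a mixed moment $(\tr\otimes\chi_{\lambda_n}\otimes\chi_{\mu_n})\big(w_1(A_n^{(1)})\,w_1'(A_n^{(2)})\,w_2(A_n^{(1)})\,w_2'(A_n^{(2)})\cdots\big)$ for an alternating word, where each $w$ is centered. Expanding each occurrence of $A_n^{(1)}$ and $A_n^{(2)}$ in the $E_{ij}\otimes e_{ij}$ basis and applying the two characters $\chi_{\lambda_n}$ and $\chi_{\mu_n}$ separately to the first and second $\mathfrak{U}(GL_n(\mathbb{C}))$ legs, the computation factors into (a) a trace over $\M_n(\mathbb{C})$ of a product of matrix units, producing Kronecker-delta constraints on the indices, and (b) two independent Weingarten sums, one weighted by $\Wg(n,\cdot)$ attached to the $\lambda_n$-character and one to the $\mu_n$-character. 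The genus-type estimates on $\Wg(n,\sigma)\sim n^{-k-|\sigma|}\mathrm{Moeb}(\sigma)$ from the ``other leading orders'' subsection then let me identify which pairings survive in the $n\to\infty$ limit: exactly as in the classical proof of asymptotic freeness of a Haar unitary and a deterministic matrix, the surviving terms are the ones for which the pairing is ``non-crossing with respect to the alternating word structure,'' and the centering hypothesis $\chi(w)$-centered kills precisely the terms that would violate the vanishing-of-alternating-centered-products axiom. I would then invoke the fact, already used in Question~\ref{ques-asymptotic moments} and the surrounding theorems, that this combinatorial pattern is the defining feature of freeness, so the limiting joint distribution of $A_n^{(1)},A_n^{(2)}$ factors as a free product of the two limiting distributions coming from $\tr\otimes\chi_{\lambda_n}$ and $\tr\otimes\chi_{\mu_n}$.

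The main obstacle I expect is not the Weingarten bookkeeping itself but controlling the two characters simultaneously: the normalized characters $\chi_{\lambda_n}$ and $\chi_{\mu_n}$ are not literally the Haar state, and one must show that the leading-order behavior of $\chi_{\lambda_n}$ on a word of $e_{ij}$'s coincides, to the order needed, with a genuine Haar integral over $\Un$, with corrections suppressed because $\dim V_{\lambda_n}\to\infty$. Concretely, the passage from $\chi_{\lambda_n}$ to the Haar state uses that $\chi_{\lambda_n}/\dim V_{\lambda_n}$ evaluated on the image of a group element is the normalized character, whose large-dimension asymptotics are governed by the same rational functions in $n$ as the Weingarten function; making this comparison uniform over all words appearing in a fixed alternating product, and checking that the error terms are $o(1)$ once both dimensions diverge, is the delicate part. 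Once that comparison is in place, the proof is a parallel of the classical asymptotic freeness argument run twice, with the two Weingarten sums decoupling because $\chi_{\lambda_n}$ and $\chi_{\mu_n}$ act on different tensor legs, and I would close by citing the moment characterization of freeness to conclude.
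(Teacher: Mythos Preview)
The paper is a survey and does not give a proof of this theorem; it attributes the result to \cite{MR3816511} with only the remark that the argument relies ``among others'' on asymptotics of Weingarten functions. So there is no in-paper proof to match against, but your outline can still be assessed.

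There is a genuine gap at the very first reduction. You claim that ``evaluating $\chi_{\lambda_n}$ on a word in the $e_{ij}$ amounts, via the morphism $E_{ij}\mapsto e_{ij}$, to a Haar integral over products and conjugates of matrix entries $u_{ij}$,'' and that $A_n$ is ``the abstract version of a Haar-random unitary conjugation.'' Neither identification is correct. The $e_{ij}$ generate the enveloping \emph{Lie} algebra; under $\rho_{\lambda}$ they act as the infinitesimal (derivative) operators of the group action, not as group elements. In the realization $V_{\lambda}\subset(\mathbb{C}^n)^{\otimes k}$ one has $\rho_{\lambda}(e_{ij})=\sum_{l=1}^{k}1^{\otimes(l-1)}\otimes E_{ij}\otimes 1^{\otimes(k-l)}\big|_{V_\lambda}$, and $\chi_{\lambda_n}$ of a monomial in the $e_{ij}$'s is the trace of a product of such sums against the Young projector --- not an integral $\int_{\Un} u_{i_1j_1}\cdots\overline{u_{i'_1j'_1}}\cdots d\mu$. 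Theorems~\ref{th-general} and~\ref{th-un-on} therefore do not apply to this quantity in the way you describe, and the mixed moment does not split into ``two independent Weingarten sums'' indexed by pairings of $u$-variables. Your closing paragraph treats the discrepancy between $\chi_{\lambda_n}$ and the Haar state as an approximation to be controlled; it is rather a category error.

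Where Weingarten asymptotics actually enter, in the approach of \cite{MR3816511} (extending Biane \cite{MR1317523}), is through the \emph{commutant} side of Schur--Weyl duality: the isotypic projector onto $V_{\lambda}$ lies in $\mathbb{C}[S_k]$, and traces of products of the derivative operators above against elements of $\mathbb{C}[S_k]$ are governed by the same central element $G=\sum_{\sigma}n^{\#\sigma}\lambda_{\sigma}$ whose pseudo-inverse is $\Wg$. The hypothesis $\dim V_{\lambda_n}\to\infty$ is used not to make $\chi_{\lambda_n}/\dim V_{\lambda_n}$ look like the Haar state, but to ensure that the spectral distribution of $A_n$ --- which by Jucys--Murphy theory is the content distribution of $\lambda_n$ --- stabilizes. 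Your endgame (genus counting, decoupling across the two tensor legs, freeness from non-crossing survivors) is the right shape, but it needs to sit on the commutant/Jucys--Murphy translation rather than on a nonexistent Haar-integral one.
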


\section{Multiplicativity and applications to Mathematical physics}

\subsection{Higher Order Freeness}

The asymptotic multiplicativity of the Weingarten function states that
$\Wg (\sigma_1\sqcup \sigma_1)=\Wg (\sigma_1 )\Wg (\sigma_2 ) (1+O(n^{-2}))$
and it is very far reaching. 
The fact that the error term $O(n^{-2})$ is summable in $n$ allows in \cite{MR1959915}
to use  a Borel-Cantelli lemma and
prove almost sure convergence of moments for random matrices; cf \cite{MR1601878} for the original proof.

A more systematic understanding of the error term is possible and has deep applications in random matrix theory.
It requires the notion of classical cumulants that we recall now.
Let $X$ be a random variable, the cumulant $C_p(X)$ is defined formally by:  
\[  C(t) = \log \E (\exp tX)= \sum_{p\ge 1}t^p\frac{C_p(X)}{p!} \; .\]
For instance, the second cumulant $ C_2(X) = \E(X^2) -  \E(X)^2 $
is the variance of the probability distribution of $X$. 
$C_p(X)$ is well defined as soon as $X$ has moments up to order $p$,
and it is an $n$-homogeneous function in $X$, therefore we can
polarize it and define an $p$-linear symmetric function
$(X_1,\ldots , X_p)\to C_p(X_1,\ldots , X_p)$. 
For any partition $\pi$ of $p$ elements with blocks $B\in \pi$, we define $C_{\pi}(X_1,\ldots , X_p)= \prod_{B\in\pi }  
C \left(\prod_{i\in B} X_i \right) $. We are now in the position to write the expectations in term of the cumulants:
$$ \E \left( \prod_{i=1}^p X_i \right) =\sum_{\pi \in P(p) } C_{\pi }.$$
The equation can be inverted through the M\"obius inversion formula.  
Asymptotic freeness considers the case where moments have a limit, whereas 
higher order asymptotic freeness
considers the case where things are known about the fluctuations of the moments: in addition to the existence
of $\lim_n\tr A_{i_1}^{(n)}\ldots A_{i_l}^{(n)}$, we assume the existence of 
$$\lim_n n^{2k-2}C_k(A_{i_{11}}^{(n)}\ldots A_{i_{l_11}}^{(n)}, \ldots , A_{i_{1k}}^{(n)}\ldots A_{i_{l_kk}}^{(n)})$$
for any sequence of indices. 
We call this set of limits the \emph{higher order limit}.
In \cite{MR2302524}, we proved
\begin{Th}
The extended family $(A_i^{(n)})_{i\in I}\sqcup (UB_j^{(n)}U^*)_{j\in J}$ admits a higher order limit.
In addition, a combinatorial rule exists to construct the joint asymptotic correlations from the asymptotic correlations of each family.
\end{Th}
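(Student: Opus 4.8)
The plan is a moment-method computation driven by Weingarten calculus: extract from $\E\bigl[\prod_\ell \Tr(w_\ell)\bigr]$ its genus expansion and then pass to cumulants. By multilinearity of traces and of classical cumulants it suffices to treat, for each $k$ and each tuple $w_1,\dots,w_k$ of words in the generators $A_i^{(n)}$ and $U B_j^{(n)} U^*$, the quantity $n^{2k-2}\,C_k\bigl(\tr w_1,\dots,\tr w_k\bigr)$, where $C_k$ is the $k$-th classical cumulant with respect to the randomness of $U$ (conditionally on the $A$'s and $B$'s, if these are themselves random). Collecting the factors $U$ and $U^*$ inside $\prod_\ell \Tr(w_\ell)$, there are $m$ factors $U$ and $m$ factors $U^*$ in total, and Theorem~\ref{th-general} together with Theorem~\ref{th-un-on} expresses
$$\E\Bigl[\,\prod_{\ell=1}^k \Tr(w_\ell)\,\Bigr]=\sum_{\sigma,\tau\in S_m}\Wg(n,\sigma\tau^{-1})\, n^{\#\alpha(\sigma)}\,t_A(\alpha(\sigma))\, n^{\#\beta(\tau)}\,t_B(\beta(\tau)),$$
where $\alpha(\sigma),\beta(\tau)$ are the permutations obtained by composing $\sigma$, respectively $\tau$, with the fixed "connection pattern" of the words, $t_A,t_B$ are the corresponding normalized traces of products of the $A_i$'s and $B_j$'s (which converge by the standing hypothesis that each family has a higher order limit), and the exponents count the index loops created by the Kronecker contractions.

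Next I would carry out the Euler-characteristic bookkeeping. Plugging in $\Wg(n,\pi)=n^{-m-|\pi|}\,\mathrm{Moeb}(\pi)(1+O(n^{-2}))$ and the almost-multiplicativity $\Wg(\pi_1\sqcup\pi_2)=\Wg(\pi_1)\Wg(\pi_2)(1+O(n^{-2}))$, each summand becomes $\mathrm{Moeb}(\sigma\tau^{-1})\,t_A(\alpha(\sigma))\,t_B(\beta(\tau))$ times an integer power $n^{\,k-2g(\sigma,\tau)}$, where $g(\sigma,\tau)\ge 0$ is the genus of the orientable surface with boundary obtained by gluing the $k$ word-polygons according to $\sigma$ and $\tau$; hence $\E[\prod_\ell \Tr(w_\ell)]=O(n^k)$ and the leading $n^k$-terms are exactly the "planar" pairs $g(\sigma,\tau)=0$. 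The crucial point is that, among these planar pairs, the surface is disconnected precisely when $(\sigma,\tau)$ splits into clusters refining a nontrivial partition of $\{1,\dots,k\}$; by the moment--cumulant formula and the fact that $\Wg$ is multiplicative across such a splitting up to $O(n^{-2})$, the disconnected planar contributions reassemble exactly into the products of lower-order cumulants subtracted off in $C_k$. Therefore $C_k\bigl(\tr w_1,\dots,\tr w_k\bigr)$ keeps only the \emph{connected} planar pairs and is precisely of order $n^{-(2k-2)}$, so $n^{2k-2}C_k$ converges and its limit is the finite sum over connected genus-zero pairs $(\sigma,\tau)$ of $\mathrm{Moeb}(\sigma\tau^{-1})$ times the products of limiting traces of the $A$'s and $B$'s attached to $\alpha(\sigma)$ and $\beta(\tau)$.

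It remains to repackage this sum as the announced combinatorial rule, for which I would use the partitioned-permutations formalism: the connected genus-zero pairs are in bijection with "annular / higher-order non-crossing" data relating the partition of the $k$ boundary cycles to the word structure, and the factor $\mathrm{Moeb}$ — which is, up to sign, Speicher's non-crossing Möbius function — is exactly the weight that converts the limiting traces of each family into its higher order free cumulants; summing first over configurations compatible with a fixed pair of higher-order non-crossing partitions and then over the latter yields the free-convolution-type rule expressing the joint higher order limit of $(A_i^{(n)})_i\sqcup(U B_j^{(n)} U^*)_j$ from the higher order limits of each family separately. The main obstacle is this last combinatorial identification together with the genus count that feeds it: one must show the exponent of $n$ is always $k-2g$ with $g\ge 0$, with equality characterized by an explicit non-crossing-type condition on $(\sigma,\tau)$ relative to the word pattern, and then match the surviving index set with the domain of the higher order free cumulant functions. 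The $O(n^{-2})$ errors are harmless, since for convergence they need only vanish in the limit, but sharp knowledge of the \emph{leading coefficient} $\mathrm{Moeb}$ of $\Wg$ on every conjugacy class is indispensable; and if one additionally wants almost sure (rather than in-expectation) statements when the $A$'s and $B$'s are random, one argues as in \cite{MR1959915} via Borel--Cantelli, the required summability being furnished by the same $O(n^{-2})$ bound.
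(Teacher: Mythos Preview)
Your outline is essentially the approach of \cite{MR2302524}, which this survey cites without reproducing a proof: Weingarten expansion of $\E[\prod_\ell \Tr(w_\ell)]$, Euler-characteristic bookkeeping identifying the exponent of $n$ as $k-2g$, passage to cumulants to isolate the connected genus-zero contributions, and repackaging via partitioned permutations.

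One point deserves more care. You compute $C_k$ \emph{conditionally} on the $A$'s and $B$'s, so your displayed formula carries the (possibly random) trace products $t_A(\alpha(\sigma)),\,t_B(\beta(\tau))$ rather than their expectations, and the limit you extract at the end of your second paragraph uses only the first-order data of each family. In the general situation the theorem addresses, the $A$'s and $B$'s are themselves random with non-trivial higher order cumulants; one must then take the full expectation, expand $\E_A\bigl[\prod_{c\in\alpha(\sigma)}\tr(A_{\cdots})\bigr]$ (and likewise for $B$) through the moment--cumulant relation of that family, and track the resulting powers of $n$. It is at \emph{this} step that the higher order limits of $A$ and $B$ actually enter and that a second layer of partitioned-permutation combinatorics appears, yielding the multiplicativity of mixed higher order free cumulants across the two subalgebras. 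Your final paragraph reaches the correct destination, but the mechanism is not quite the one you describe: the Weingarten $\mathrm{Moeb}$ encodes the contribution of the Haar unitary, while the higher order free cumulants of $A$ and $B$ arise from inverting their own moment--cumulant relations on the lattice of partitioned permutations, not from the $\mathrm{Moeb}$ factor already present.
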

This rule extends freeness and is called \emph{higher order freeness}.
Subsequent work was done in the case of orthogonal invariance by Mingo and Redelmeier. %%citer

\subsection{Matrix integrals}

Historically, matrix integrals have been studied before higher order freeness. 
However, from the point of view of formal expansion, higher order freeness supersedes matrix integrals.
In \cite{MR1959915}, we proved the following

\begin{Th} 
Let $A$ be a non-commutative polynomial in formal variables 
$(Q_i)_{i\in I}$, formal unitaries $U_j,j\in J$ and their adjoint. 
Consider in $\M_n(\mathbb{C})$ matrices $(Q_i^{(n)})_{i\in I}$ admitting a
joint limiting distribution as $n\to\infty$, and in iid Haar distributed $(U_j^{(n)})_{j\in J}$ and their adjoint. 
Evaluating $A$ in these matrices in the obvious sense, we obtain a random matrix $A_n$ and consider
 the Taylor expansion around zero of the function
\begin{equation*}
z\rightarrow n^{-2}\log E (\exp (zn^2 A_n)) = \sum_{q\geq 1} a_q^{(n)} z^q,
\end{equation*}
Then, for all $q$, $\lim_n a_q^{(n)}$ exists and depends only on the polynomial and the limiting distribution of $Q_i^{(n)}$.
\end{Th}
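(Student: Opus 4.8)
The plan is to expand $\E(\exp(zn^2 A_n))$ as a formal power series in $z$ and show that each coefficient, once normalized by $n^{-2}$ in the logarithm, converges. First I would write $\E(\exp(zn^2 A_n)) = \sum_{q\ge 0} \frac{z^q n^{2q}}{q!}\E(\Tr(A_n)^q)$ — wait, more carefully: since the natural scaling that makes things converge is the one where $\tr = n^{-1}\Tr$ carries a limit, I would track that $A_n$, being a polynomial in the $Q_i^{(n)}$ (bounded in $\tr$-moments) and the Haar unitaries, has $\E(\Tr(A_n^{j_1})\cdots \Tr(A_n^{j_r}))$ of a controlled order in $n$. The key structural input is the moment–cumulant relation recalled just above: $\E\big(\prod_{i=1}^p X_i\big) = \sum_{\pi\in P(p)} C_\pi$, applied with each $X_i$ a copy of $n^2 A_n$ (or rather with $X_i = \Tr(\text{monomial})$). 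Taking $\log$ of the exponential generating function turns the sum over all partitions into a sum over the single-block contribution, so that $n^{-2}\log\E(\exp(zn^2 A_n)) = \sum_{q\ge 1} \frac{z^q n^{2q-2}}{q!} C_q(\Tr A_n,\ldots,\Tr A_n)$, and the claim reduces to: for each fixed $q$, the rescaled classical cumulant $n^{2q-2} C_q(\Tr A_n^{(1)},\ldots,\Tr A_n^{(q)})$ — where each $A_n^{(j)}$ is an independent-looking copy but actually the same $A_n$ — converges as $n\to\infty$, and the limit depends only on the limiting distribution of the $Q_i^{(n)}$.

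Next I would reduce the cumulant to a sum of Weingarten-weighted combinatorial quantities. Expanding each $\Tr A_n$ into monomials in the entries of the $Q_i^{(n)}$ and the $u_{ij}$, $\overline{u_{ij}}$ of the Haar unitaries, and using Theorems~\ref{th-general} and \ref{th-un-on} to integrate out the unitaries, one obtains an expression indexed by tuples of permutations (one pair $(\sigma_j,\tau_j)$ per appearance of a unitary $U_j$, with $U_j$ and $U_{j'}$ for $j\ne j'$ integrated independently) weighted by products $\prod_j \Wg(n,\tau_j\sigma_j^{-1})$, and multiplied by the $\Tr$-evaluations of the $Q_i^{(n)}$-monomials along the cycles dictated by the permutations. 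The classical cumulant $C_q$ then becomes, by the M\"obius inversion on $P(q)$, an alternating sum over partitions of these moment expressions; a standard manipulation (the same one used in \cite{MR1959915}) shows that the cumulant structure forces a \emph{connectedness} constraint on the associated combinatorial objects, i.e. only terms whose underlying graph/surface is connected survive in $C_q$.

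The heart of the argument — and the step I expect to be the main obstacle — is the power-counting: one must show that the net power of $n$ attached to a connected configuration of permutations is at most $2-2q$ (so the rescaling by $n^{2q-2}$ yields something $O(1)$), and identify which configurations achieve equality. Here I would invoke the asymptotics $W = \Gr^{-1} = n^{-k}1(1+O(n^{-1}))$ from the preceding subsections, or more precisely $\Wg(n,\sigma) = n^{-k-|\sigma|}\mathrm{Moeb}(\sigma)(1+O(n^{-2}))$, together with the asymptotic multiplicativity, to turn each Weingarten factor into an explicit power of $n$ times a M\"obius sign. Combining these with the Euler-characteristic bookkeeping for the cycles of the $\Tr$-evaluations (each $\Tr$ of a length-$\ell$ word contributes one $n$ per cycle, while the $Q_i^{(n)}$-traces are $O(1)$ in $\tr$, hence $O(n)$ in $\Tr$ per factor of $\tr$), one reads off that the leading power of $n$ is governed by the genus of the surface obtained by gluing the faces along the permutation data; connectedness plus the genus inequality give exactly $n^{2-2q}$ at genus zero. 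The limit coefficient is then the sum of $\mathrm{Moeb}$-weighted genus-zero contributions, manifestly depending only on the limiting $\tr$-distribution of the $Q_i^{(n)}$ and not on any finite-$n$ artefacts. The bookkeeping is delicate — keeping track of which indices are free, how the connectedness from the cumulant interacts with the independence of distinct $U_j$'s, and verifying the genus bound is uniform — but it is exactly the type of surface-counting estimate that Weingarten calculus was built to handle, and no new idea beyond the stated asymptotics of $\Wg$ is required.
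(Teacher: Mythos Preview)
The paper does not supply a proof; the theorem is quoted from \cite{MR1959915}. Your outline---passing to classical cumulants via the logarithm, expanding each cumulant through the Weingarten formula (Theorems~\ref{th-general} and \ref{th-un-on}), using the M\"obius inversion on $P(q)$ to isolate the connected contributions, and then controlling the net power of $n$ by an Euler-characteristic/genus count with the asymptotics $\Wg(n,\sigma)=n^{-k-|\sigma|}\mathrm{Moeb}(\sigma)(1+O(n^{-2}))$---is exactly the strategy of \cite{MR1959915}. The paper itself signals this reduction by placing the theorem immediately after the higher-order-freeness statement of Section~4.1 (existence of $\lim_n n^{2k-2}C_k(\cdots)$) and remarking that ``higher order freeness supersedes matrix integrals.''

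One bookkeeping caveat: the paper's phrase ``a random matrix $A_n$'' is loose---for $\log\E(\exp(zn^2 A_n))$ to be a scalar power series, $A_n$ must itself be a scalar, namely $A_n=\tr P_n$ for the polynomial $P$ evaluated in the matrices (so that $n^2A_n=n\Tr P_n$, the standard HCIZ scaling). Your line $C_q(\Tr A_n,\ldots,\Tr A_n)$ inherits this ambiguity: with the reading $A_n=\tr P_n$ the identity $a_q^{(n)}=\frac{n^{2q-2}}{q!}C_q(A_n,\ldots,A_n)$ is correct and matches Section~4.1 verbatim, whereas if you literally insert an extra unnormalized $\Tr$ the exponent should be $q-2$, not $2q-2$. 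Once this normalization is fixed, the genus bound you invoke gives precisely $C_q=O(n^{2-2q})$ in the normalized-trace variables, and the limit is the genus-zero (planar, connected) contribution weighted by $\mathrm{Moeb}$, depending only on the limiting $\tr$-moments of the $Q_i^{(n)}$---as you state.
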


In \cite{MR2531371}, we upgraded this result in the case where $A_n$ is selfadjoint and proved that there exists a \emph{real} neighborhood of zero on which the convergence holds uniformly.
The complex convergence remains a difficult problem, as a uniform understanding of the higher genus expansion must be obtained. Novak made a recent breakthrough in this direction, in the case of the HCIZ integral, see \cite{2006.04304}.

\subsection{Random Tensors}
Let us revisit Question \ref{main-question}, under the assumption that $U$ has \emph{more structure}, i.e. 
less randomness. Our model is a tensor structure, namely,
$U=U_1\otimes\ldots \otimes U_D$ where
$U_i\in \M_n(\mathbb{C})$  are iid. In other words we are interested in the symmetries under
conjugation by elements of the group $\Un^{\otimes D}$.
The joint moments of a matrix are a complete invariant of global symmetry under $\Un$-conjugation in
$\M_n(\mathbb{C})$, however for $U_1\otimes\ldots \otimes U_D$-invariance in 
$\M_n(\mathbb{C})^{\otimes D}$, one needs more invariants,
generated, for $\sigma_1,\ldots , \sigma_D\in S_k$, by
\begin{equation}
\begin{split}
\Tr_{\sigma_1,\ldots ,\sigma_D}(A)=
\sum_{i_{11},\ldots ,i_{Dk},j_{11},\ldots ,j_{Dk}}
A_{i_{11}\ldots i_{D1},j_{11}\ldots j_{D1}}\ldots 
A_{i_{1k}\ldots i_{Dk},j_{1k}\ldots j_{Dk}}\\
\delta_{i_{11},j_{1\sigma_1(1)}}\ldots \delta_{i_{1k},j_{1\sigma_1(k)}}
\cdots
\delta_{i_{D1},j_{D\sigma_D(1)}}\ldots \delta_{i_{Dk},j_{D\sigma_D(k)}}.
\end{split}
\end{equation}
 In the case of higher tensors, thanks to the Weingarten calculus, we unveil many new inequivalent asymptotic regimes 
 for higher order tensors. 
 These questions are addressed in a series of projects with Gurau and Lionni, starting with \cite{2010.13661}. 
 We study the asymptotic expansion of the
 Fourier transform of the tensor valued Haar measure -- a tensor extension of the Harish-Chandra integral
 to tensors and considerably extend the single tensor case. 
 Just as the HCIZ integral can be seen as a generating function for monotone Hurwitz numbers, which count certain 
 weighted branched coverings of the 2-sphere, the integral studied in \cite{2010.13661} leads to a generalization of monotone Hurwitz numbers, which count weighted branched coverings of a 
 collection of 2-spheres that `touch' at one common non-branch node.

\subsection{Quantum Information Theory}

Quantum Information theory has been a powerful source of problems in random matrix theory in the last two decades, and their tensor structure has made it necessary to resort to moment techniques. The goal of this section is to 
elaborate on a few salient cases.  
One starting point is the paper \cite{MR2443305} where the authors compute moments of the output of 
random quantum channels. 
We just recall here strictly necessary definitions, and refer to \cite{MR3432743} for details.
A \emph{quantum channel} $\Phi$ is a linear map $\M_n(\mathbb{C})\to \M_k(\mathbb{C})$ that preserves the non-normalized trace, and that is completely
positive, i.e. $\Phi\otimes \mathrm{Id}_l: \M_n\otimes \M_l(\mathbb{C})\to \M_k(\mathbb{C})\otimes \M_l(\mathbb{C})$ 
is positive for any integer $l$.
It follows from Stinespring theorem that for any quantum channel, there exists an integer $p$ and
an isometry $U: \mathbb{C}^n\to \mathbb{C}^k\otimes \mathbb{C}^p$ 
such that 
$\Phi (X)= (\mathrm{Id}_k\otimes \Tr_p)UXU^*$.

The set of \emph{density matrices} $D_n$ 
consists in the selfadjoint matrices whose eigenvalues are non-negative and whose trace is $1$.
For $A\in D_n$, we define its \emph{von Neumann Entropy } $H(A)$ as $\sum_{i=1}^n-\lambda_i(A)\log \lambda_i(A)$
with the convention that $0\log 0=0$ and the eigenvalues of $A$ are $\lambda_1(A)\ge\ldots \ge \lambda_n(A)$.
The \emph{minimum output entropy} of a quantum channel $\Phi$ is defined as 
$H_{min}(\Phi)=\min_{A\in D_n} H(\Phi (A))$, 
and a crucial question in QIT was whether one can find $\Phi_1,\Phi_2$ such that
$$H_{min}(\Phi_1\otimes \Phi_2)<H_{min}(\Phi_1)+H_{min}(\Phi_2)$$
For the statement, implications and background, we refer to \cite{MR3432743}.
An answer to this question was given in \cite{Hastings}
and it relies on random methods, which motivates us to consider quantum channels obtained from Haar unitaries.
A description of $\Phi (D_n)$ in some appropriate large $n$ limit has been found in \cite{MR2995183},
and the minimum in the limit of the entropy was found in \cite{MR3452274}.
In the meantime, the image under the tensor product of random channels $\Phi_1\otimes \Phi_2$ of appropriate 
matrices (known as Bell states) had to be computed.
To achieve this, we had to develop a graphical version of Weingarten calculus in \cite{MR2651902}.

We consider
the case where $k$ is a fixed integer, and
$t\in (0,1)$ is a fixed number. 
For each $n$, we consider a random unitary matrix $U \in \M_{nk}(\mathbb{C} )$, and a projection $q_n$ of 
$\M_{nk} (\mathbb{C} )$ of rank
$p_n$ such that $p_n/(nk)\sim t$ as $n\to\infty$.
Our model of a random quantum channel is 
$\Phi : \M_{p_n}(\mathbb{C} )\to \M_n(\mathbb{C})$
given by
$\Phi (X)=\tr_k (U X U^*)$,
where $\M_{p_n}(\mathbb{C} )\simeq q_n\M_{nk}(\mathbb{C} )q_n$.  
By $Bell$ we denote the Bell state on $\M_{p_n}(\mathbb{C} )^{\otimes 2}$. In \cite{MR2651902}, we proved

\begin{Th}
\label{thm:product-channel}
Almost surely, as $n \to \infty$, the random matrix $\Phi\otimes\overline\Phi(\mathrm{Bell}) \in \M_{n^2}(\mathbb{C})$ has non-zero eigenvalues converging towards
\[\gamma^{(t)} = \left( t + \frac{1-t}{k^2},\underbrace{\frac{1-t}{k^2}, \ldots, \frac{1-t}{k^2}}_{k^2-1 \text{ times}}\right).\]
\end{Th}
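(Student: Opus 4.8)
The plan is to reduce the computation of the spectrum of $\Phi\otimes\overline\Phi(\mathrm{Bell})$ to a moment computation to which the Weingarten formula for $\Un$ (Theorems~\ref{th-general} and~\ref{th-un-on}) applies, and then to identify the limiting distribution by matching moments. First I would unwind the definitions: writing $\mathrm{Bell}$ explicitly as a rank-one operator $|\xi\rangle\langle\xi|$ on $\M_{p_n}(\mathbb{C})^{\otimes 2}$ (up to normalization by $p_n$), and using $\Phi(X)=\tr_k(UXU^*)$ together with $q_n\M_{nk}(\mathbb{C})q_n\simeq\M_{p_n}(\mathbb{C})$, one expresses the matrix $M_n:=\Phi\otimes\overline\Phi(\mathrm{Bell})\in\M_{n^2}(\mathbb{C})$ entrywise in terms of the entries $u_{ij}$ of $U$, the projection $q_n$, and several partial traces over the auxiliary $\mathbb{C}^k$ factors. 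The graphical Weingarten calculus of \cite{MR2651902} is exactly the bookkeeping device that keeps these index contractions manageable; the key point is that because of the presence of $\overline\Phi$, conjugation by $U$ appears with the correct number of $U$'s and $\overline U$'s for the unitary Weingarten sum over $S_\ell$ to be the natural object.

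Next I would compute $\E\,\Tr(M_n^\ell)$ for each fixed $\ell$. Applying Theorem~\ref{th-general} turns $\E\,\Tr(M_n^\ell)$ into a sum over pairs of permutations $(\sigma,\tau)\in S_\ell\times S_\ell$ of $\Wg(nk,\tau\sigma^{-1})$ times a product of Kronecker contractions; each such contraction closes into a collection of loops, contributing a power $n^{\#(\cdot)}$, and the rank condition $p_n/(nk)\sim t$ enters through the trace of $q_n$ producing powers of $p_n\sim tnk$. Using the leading-order asymptotics of the Weingarten function, $\Wg(nk,\pi)=(nk)^{-\ell-|\pi|}\mathrm{Moeb}(\pi)(1+O(n^{-2}))$ from the subsection on leading orders, one extracts the dominant term in $n$: I expect the genus-zero / planar contributions (those $(\sigma,\tau)$ for which the various loop counts are simultaneously maximal) to survive, and a standard resolution shows that the surviving terms reorganize into the $\ell$-th moment of the finitely supported measure $\gamma^{(t)}$, i.e. $\big(t+\tfrac{1-t}{k^2}\big)^{\ell}+(k^2-1)\big(\tfrac{1-t}{k^2}\big)^{\ell}$. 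Matching all moments $\ell\ge 1$ (together with $\Tr(M_n)=1$, which pins down the normalization and confirms that $M_n$ is a density matrix with $\le k^2$ nonzero eigenvalues — a structural fact one should observe early, since $M_n$ factors through a space of dimension $k^2$) identifies the limiting nonzero spectrum as claimed, in expectation.

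Finally I would upgrade convergence in expectation to almost sure convergence. For this I would estimate the variance $\E\,\Tr(M_n^\ell)^2-(\E\,\Tr(M_n^\ell))^2$ by running the same Weingarten expansion for $\Tr(M_n^\ell)^2$, now over $S_{2\ell}\times S_{2\ell}$, and checking that the ``connected'' contributions — those coupling the two copies — are down by a factor $O(n^{-2})$; this is precisely the summable error term exploited, via Borel--Cantelli, in \cite{MR1959915} to get almost sure convergence of random matrix moments, and the argument transfers verbatim once the variance bound is in place. Since $M_n$ has rank at most $k^2$ uniformly in $n$, almost sure convergence of all moments is equivalent to almost sure convergence of the (finitely many) nonzero eigenvalues, which yields the statement.

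The main obstacle I anticipate is the bookkeeping in the second step: correctly enumerating which pairs $(\sigma,\tau)$ contribute at the leading power of $n$, given the interplay between the loops produced by the Kronecker deltas, the power of $p_n\sim tnk$ coming from the projection $q_n$, and the power $(nk)^{-\ell-|\tau\sigma^{-1}|}$ from the Weingarten function. Getting the exponent of $n$ right for every term — and verifying that the leading terms assemble into exactly the stated two-value spectrum rather than something with more atoms — is where the real work lies; the graphical calculus of \cite{MR2651902} is the tool that makes this tractable, and I would lean on it heavily, while the almost-sure upgrade is comparatively routine.
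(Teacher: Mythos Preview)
Your proposal is correct and follows the same route the paper indicates: the paper does not prove Theorem~\ref{thm:product-channel} in detail but attributes it to \cite{MR2651902} and states that it ``relies heavily on Weingarten calculus, and in particular a graphical interpretation thereof,'' which is precisely the moment computation via graphical Weingarten calculus plus a Borel--Cantelli upgrade that you outline. One small bookkeeping point to fix when you carry this out: since $\Phi$ contributes one $U$ and one $U^*$ and $\overline\Phi$ contributes one $\overline U$ and one $\overline U^*$, the expansion of $\Tr(M_n^\ell)$ involves $2\ell$ factors of $U$ and $2\ell$ of $\overline U$, so the Weingarten sum runs over $S_{2\ell}\times S_{2\ell}$ rather than $S_\ell\times S_\ell$; this is exactly the kind of index tracking you already flag as the main obstacle, and the graphical calculus of \cite{MR2651902} handles it.
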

This result plays an important result in the understanding of phenomena underlying the sub-additivity of the 
minimum output entropy, and relies heavily on Weingarten calculus, and in particular
a graphical interpretation thereof. Much more general results in related areas of 
Quantum Information Theory have been obtained in 
\cite{MR3346125,MR2830615,MR2967961,MR3083280,MR3862490,MR3658153}.

\section{Uniform estimates and applications to analysis}
\subsection{A motivating question}

The previous sections show that when the degree of a polynomial is fixed, very precise asymptotics can be obtained in the limit of large dimension. For the purpose of analysis, an important question is, whether such estimates 
hold uniformly. 
About 20 years ago,  Gilles Pisier asked me the following question: given $k$ iid Haar unitaries $U_1^{(n)},\ldots ,U_k^{(n)}\in \Un$, what is the 
large dimension behavior of the real random variable $$t_n=||U_1^{(n)}+\ldots +U_k^{(n)}||_{\infty},$$
where $||\cdot ||_{\infty}$ stands for the operator norm?
It follows from asymptotic freeness results that almost surely $\liminf t_n\ge 2\sqrt{k-1}$ as soon as $k\ge 2$.
Setting $X_n=U_1^{(n)}+\ldots +U_k^{(n)}$, 
it would be in principle enough to estimate 
$$\E(\Tr ((X_nX_n^*)^{l(n)}))$$ for $l(n)>>\log n$. 
However, there are two significant hurdles:
(i)
Uniform estimates of Weingarten calculus would be needed. 
(ii) 
Unlike in the multi matrix model case, the combinatorics grow exponentially, and a direct 
moment approach is not possible. 
Both hurdles require developing specific tools, which we describe in the sequel. 

One notion on which we rely heavily is that of {\em strong convergence}.
Given a multi matrix model that admits a joint limiting distribution in Voiculescu's sense, 
we say that it  {\em converges strongly}
iff the operator norm of any polynomial $P$, evaluated in the matrices of the  model 
-- thus yielding the random matrix $P_n$ -- satisfies
$$\lim_n ||P_n||=\lim_\ell (\lim_n n^{-1}\Tr ((P_nP_n^*)^\ell)^{(2\ell)^{-1}}.$$
In other words, the operator norm of any matrix model obtained from a non-commutative polynomial converges
to the operator norm of the limiting object. 
Strong convergence was established in \cite{MR2183281} in the case of Gaussian random matrices. 
Subsequently, the author and Male solved the counterpart for Haar unitary matrices in \cite{MR3205602}, with no explicit speed of convergence. 
This result was refined further by Parraud \cite{2005.13834}
with explicit speeds of convergence, relying on ideas of  \cite{1912.04588}.  
The strongest result concerning strong convergence of random unitaries can be found in \cite{2012.08759}:

\begin{Th}\label{thm-bc2}
$(\overline U_i^{\otimes q_-}\otimes U_i^{\otimes q_+})_{i=1,\ldots , d}$ are strongly asymptotically free as $n\to\infty$ on
the orthogonal of fixed point spaces. 
\end{Th}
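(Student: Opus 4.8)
The plan is the standard two-step route to strong convergence: first pin down the limiting $*$-distribution and establish convergence of moments, then upgrade this to convergence of operator norms.

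\emph{Step 1 (weak convergence and the limit).} Let $H_n$ be the orthogonal complement, inside $(\mathbb{C}^n)^{\otimes(q_-+q_+)}$, of the fixed-point space of $\Un$ acting on $\overline V^{\otimes q_-}\otimes V^{\otimes q_+}$, and let $W_i^{(n)}$ be the restriction of $\overline{U_i}^{\otimes q_-}\otimes U_i^{\otimes q_+}$ to $H_n$; note this fixed space has dimension $0$ unless $q_-=q_+$, in which case it has dimension $q_-!$ for $n$ large, so removing it is harmless for normalizations. Using the unitary Weingarten formula of Theorem~\ref{th-un-on}, I would show that, with respect to the normalized trace of $B(H_n)$, the family $(W_1^{(n)},\dots,W_d^{(n)})$ converges in $*$-distribution to a $*$-free family $(w_1,\dots,w_d)$ of Haar unitaries in a tracial von Neumann algebra. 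Indeed a trace of a word in the $W_i^{(n)}$ and their adjoints factors, by independence of the $U_i$, over colours into Haar integrals $\int U^{\otimes a}\otimes\overline U^{\otimes b}$, each evaluated by Weingarten; the leading order in $n$ selects exactly the ``non-crossing'' pairings and reproduces the moments of free Haar unitaries, while the subtraction built into $H_n$ cancels precisely the term in which a colour-block of $W_i$'s collapses onto the trivial subrepresentation. This also yields the easy half $\liminf_n\|P(W^{(n)})\|\ge\|P(w)\|$ for every noncommutative $*$-polynomial $P$, so only the matching $\limsup_n\le$ remains.

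\emph{Step 2 (reductions).} For the upper bound I would apply Pisier's linearization trick to reduce to affine expressions with matrix coefficients: it suffices to prove, for every $N$ and all self-adjoint $a_0,a_1,\dots,a_d\in\M_N(\mathbb{C})$, that the norm of $M_n:=a_0\otimes 1+\sum_{i=1}^d\big(a_i\otimes W_i^{(n)}+a_i^*\otimes (W_i^{(n)})^*\big)$ has $\limsup_n$ at most the norm of the corresponding free operator $m:=a_0\otimes 1+\sum_i\big(a_i\otimes w_i+a_i^*\otimes w_i^*\big)$. Since $U\mapsto\overline U^{\otimes q_-}\otimes U^{\otimes q_+}$ is Lipschitz and $\|M_n\|$ is a Lipschitz function of $(U_1,\dots,U_d)\in\Un^d$ with a controlled constant, concentration of measure on the unitary group (the log-Sobolev inequality) shows that $\|M_n\|$ concentrates around $\E\|M_n\|$ at scale $n^{-1/2+o(1)}$, so it is enough to bound $\E\|M_n\|$.

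\emph{Step 3 (the moment estimate, the heart).} I would bound $\E\|M_n\|\le(\dim H_n)^{1/(2\ell)}\big(\E\,\Tr(M_n^{2\ell})\big)^{1/(2\ell)}$ with $\ell=\ell_n\to\infty$ slowly enough that $(\dim H_n)^{1/(2\ell_n)}\to 1$, then expand $\Tr(M_n^{2\ell})$ and take expectations colour by colour via Weingarten. This rewrites the quantity as a sum over combinatorial data --- for each colour a tuple of permutations recording the Weingarten pairings --- weighted by $n$ to the number of loops produced, by values of $\Wg$, and by traces of words in the $a_i$. I would split this sum into: (a) the ``leading'' configurations, which are planar in the appropriate sense and reassemble exactly into $\tau(m^{2\ell})\le\mathrm{poly}(\ell)\,\|m\|^{2\ell}$; and (b) all other configurations, each of which carries a strictly negative power of $n$. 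The crux is to show that the total contribution of (b) is $o(1)$ although there are exponentially many (in $\ell$) such terms: this requires the uniform control of Weingarten functions (their monotonicity, and sharp bounds $|\Wg(n,\sigma)|\lesssim n^{-k-|\sigma|}$ valid uniformly for $k$ up to a power of $n$) together with a delicate accounting of how the $q_-+q_+$ parallel index strands of the tensor model, and the crossings between ``$\overline U$-legs'' and ``$U$-legs'', create or absorb factors of $n$. Passing to $H_n$ is indispensable exactly here, since it deletes the configurations routed through the trivial subrepresentation, which would otherwise contribute the strictly larger quantity $\|a_0+\sum_i(a_i+a_i^*)\|^{2\ell}$.

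\emph{Main obstacle.} Essentially all the difficulty is in item (b): producing Weingarten estimates sharp enough, and a combinatorial bookkeeping fine enough, that the exponential-in-$\ell$ proliferation of defect configurations is beaten by the polynomial-in-$n$ gain, uniformly as $\ell$ is taken as large as $\sim(\log n)^{1+o(1)}$. In the fixed-degree multi-matrix case each extra handle costs a clean $n^{-2}$ and one sums a geometric series, but --- as the text emphasizes --- here the combinatorics grow exponentially, and the tensor structure further complicates matters by forcing one to track $q_-+q_+$ slaved index strands simultaneously; making this quantitative, rather than the soft linearization and concentration steps, is where the real work lies.
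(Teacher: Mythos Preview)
Your outline captures the standard strong-convergence template (weak limit, linearization, high-moment bound), but it misses the specific mechanism the paper uses to tame the exponential combinatorics, and the paper is explicit that your Step~3 as written does not go through. After linearization you propose to bound $\E\,\Tr(M_n^{2\ell})$ directly for $\ell\sim(\log n)^{1+o(1)}$; the text says, immediately after introducing linearization, that this ``turns out to be insufficient when one has to resort to moment methods,'' because ``the combinatorics grow exponentially, and a direct moment approach is not possible.'' You correctly flag this as the main obstacle, but you do not supply an idea to overcome it --- and that idea is the content of the proof.

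The missing ingredient is the \emph{operator non-backtracking} reduction (Theorem~\ref{th-non-backtracking}): one passes from the linearized pencil $A_\lambda$ to the associated non-backtracking operator $B=\sum_{j\ne i^*}b_j\otimes E_{ij}$, and recovers the spectrum of $A_\lambda$ from that of $B$. The gain is that the traces $\tau(B^TB^{*T})$ expand over \emph{non-backtracking} paths, which suppresses precisely the part of the exponential blowup that wrecks the naive moment computation; only then are the uniform Weingarten bounds strong enough. This step is absent from your plan. Two further points where your proposal diverges from the paper: for $q_-=q_+$ the fixed-point issue is handled not by restricting to $H_n$ and chasing the subtraction through the sum, but by \emph{centering} $[X]=X-\E X$ and a refined Weingarten expansion with a uniform extra decay of $n^{-2}$ per ``lonesome block''; and the endgame is not concentration of measure but a comparison with Gaussian vectors.
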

This means that strong asymptotic freeness does not hold at the sole level of the fundamental representation of $\Un$, but with respect to any sequence of representation associated with a
non-trivial $(\lambda, \mu )$. In other words, the only obstructions to strong freeness are the dimension one irreducible representations of $\Un$. 
We need a \emph{linearization step}, popularized by \cite{MR2183281} to evaluate the norm of 
$\sum_{i=-d}^d a_i\otimes X_i^{(n)},$
where $X_{-i}^{(n)}=X_{i}^{(n)*}$ and $X_{0}^{(n)}=Id$.
Although this first simplification step was sufficient to obtain strong convergence for iid $GUE$ -- i.e., matrices with
high symmetries -- thanks to analytic techniques, this turns out to be insufficient when one has to resort to moment methods. In \cite{MR4024563}, we initiated techniques based on a \emph{operator version} of \emph{non-backtracking
theory}, which we generalized in \cite{2012.08759}. We outline one key feature here. 

We consider $(b_1, \ldots, b_l)$  elements in $\mathcal B(\mathcal H)$ where $\mathcal H$ is a Hilbert space. 
We assume that the index set is endowed with an involution 
$i \mapsto i^{*}$ (and $i^{**} = i$ for all $i$). 
The \emph{non-backtracking operator} associated to the $\ell$-tuple of matrices 
$(b_1, \ldots, b_l)$ 
 is the operator on 
$\mathcal B(\mathcal H \otimes \mathbb{C}^l)$ defined by
\begin{equation}
B = \sum_{j \ne i^*} b_j  \otimes E_{ij},
\end{equation}
The following theorem allows to leverage moments techniques on linearization of non-commutative
polynomials through the study of $B$ :
\begin{Th}\label{th-non-backtracking}
Let  $\lambda \in \mathbb{C}$ satisfy 
$\lambda^2  \notin  \cup_{i \in \{1,\ldots , l\}} \mathrm{spec}(  b_i b_{i^*})$. 
Define the operator $A_\lambda$ on $\mathcal H$  through
 $$A_\lambda  = b_0 ( \lambda) +  \sum_{i=1}^\ell b_i (\lambda) \,, \qquad b_i (\lambda) = \lambda  b_{i} ( \lambda^2 - b_{i^*} b_{i} )^{-1}$$ 
and 
$$b_0 ( \lambda) =  - 1 -  \sum_{i=1}^\ell  b_{i}(  \lambda^2 - b_{i^*} b_i )^{-1} b_{i^*}.$$
Then $\lambda \in \sigma (B)$ if and only if  $0 \in \sigma ( A_\lambda)$. 
\end{Th}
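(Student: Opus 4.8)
\noindent\emph{Proof strategy.} The plan is to recognize this identity as an operator‑valued Ihara--Bass formula and to prove it by a double Schur‑complement argument applied to a single auxiliary block operator. Since $i\mapsto i^{*}$ is an involution of $\{1,\dots ,l\}$, we have $\bigcup_{i}\mathrm{spec}(b_{i}b_{i^{*}})=\bigcup_{i}\mathrm{spec}(b_{i^{*}}b_{i})$, so the hypothesis makes every resolvent $R_{i}:=(\lambda^{2}-b_{i^{*}}b_{i})^{-1}$ — and hence $b_{i}(\lambda)$, $b_{0}(\lambda)$, $A_{\lambda}$ — well defined and bounded; it also yields, via the elementary identity $x(\mu-yx)^{-1}=(\mu-xy)^{-1}x$ applied with $x=b_{i^{*}}$, $y=b_{i}$, $\mu=\lambda^{2}$, the commutation relation $b_{i^{*}}R_{i^{*}}=R_{i}b_{i^{*}}$, which is the algebraic fact that makes the computation close up.

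First I would introduce, on $\mathcal H\oplus(\mathcal H\otimes\mathbb C^{l})$, the bounded operator
\[
M=\begin{pmatrix}\mathrm{Id}_{\mathcal H} & -P\\ -\,\iota & \lambda\,\mathrm{Id}+\Sigma\end{pmatrix},\qquad
P\big((\xi_{i})_{i}\big)=\sum_{i}b_{i}\xi_{i},\quad \iota(\eta)=(\eta,\dots ,\eta),\quad (\Sigma\xi)_{i}=b_{i^{*}}\xi_{i^{*}},
\]
the point being the bookkeeping identity $B=\iota P-\Sigma$, i.e. $(B\xi)_{i}=\sum_{j\neq i^{*}}b_{j}\xi_{j}=\sum_{j}b_{j}\xi_{j}-b_{i^{*}}\xi_{i^{*}}$. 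Because the top‑left corner of $M$ is invertible, eliminating the first row (an explicit unipotent lower‑triangular factorization) shows that $M$ is invertible if and only if its Schur complement $\lambda\,\mathrm{Id}+\Sigma-\iota P=\lambda\,\mathrm{Id}-B$ is invertible, that is, if and only if $\lambda\notin\sigma(B)$.

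Next I would compute the Schur complement of $M$ with respect to its other diagonal block. One first checks that $\lambda\,\mathrm{Id}+\Sigma$ is invertible: since $(\Sigma^{2}\xi)_{i}=b_{i^{*}}b_{i}\xi_{i}$, one has $(\lambda\,\mathrm{Id}+\Sigma)(\lambda\,\mathrm{Id}-\Sigma)=\lambda^{2}\,\mathrm{Id}-\Sigma^{2}=\bigoplus_{i}(\lambda^{2}-b_{i^{*}}b_{i})$, invertible by hypothesis, so $(\lambda\,\mathrm{Id}+\Sigma)^{-1}=(\lambda\,\mathrm{Id}-\Sigma)\,D$ with $D=\bigoplus_{i}R_{i}$. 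The resulting Schur complement is $\mathrm{Id}_{\mathcal H}-P(\lambda\,\mathrm{Id}+\Sigma)^{-1}\iota$, and unwinding the definitions,
\[
P(\lambda\,\mathrm{Id}+\Sigma)^{-1}\iota=P(\lambda\,\mathrm{Id}-\Sigma)D\iota=\sum_{i}\big(\lambda\,b_{i}R_{i}-b_{i}b_{i^{*}}R_{i^{*}}\big)=\sum_{i}b_{i}(\lambda)+\mathrm{Id}_{\mathcal H}+b_{0}(\lambda)=A_{\lambda}+\mathrm{Id}_{\mathcal H},
\]
where the middle step uses $b_{i}b_{i^{*}}R_{i^{*}}=b_{i}R_{i}b_{i^{*}}$ (from $b_{i^{*}}R_{i^{*}}=R_{i}b_{i^{*}}$) together with the very definitions of $b_{i}(\lambda)$ and $b_{0}(\lambda)$. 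Hence this Schur complement equals $-A_{\lambda}$, so $M$ is invertible if and only if $A_{\lambda}$ is. Comparing the two reductions gives $\lambda\in\sigma(B)\iff 0\in\sigma(A_{\lambda})$.

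Both Schur‑complement steps are routine: each is an explicit $2\times 2$ block $LU$ (resp. $UL$) factorization by a unipotent triangular operator, valid in any unital Banach algebra, so that invertibility of $M$ reduces to invertibility of the relevant corner, with no positivity or self‑adjointness needed; boundedness of $P,\iota,\Sigma$ and of $(\lambda\,\mathrm{Id}+\Sigma)^{-1}$ is immediate. The substantive point — and the only place the spectral hypothesis really enters — is the invertibility of $\lambda\,\mathrm{Id}+\Sigma$ together with the commutation $b_{i^{*}}R_{i^{*}}=R_{i}b_{i^{*}}$: these are exactly what convert the abstract Schur complement into the concrete operator $A_{\lambda}$ of the statement. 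The main thing to be careful about is the non‑commutative bookkeeping, namely keeping the involution $i\mapsto i^{*}$ and the orders of the factors $b_{i}$, $b_{i^{*}}$, $R_{i}$, $R_{i^{*}}$ straight throughout the two reductions.
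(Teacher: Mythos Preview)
The paper is a survey and does not include a proof of this theorem; it only states the result and points to the original references (Bordenave--Collins, \cite{MR4024563,2012.08759}) where it is proved. Your argument is correct and is precisely the operator-valued Ihara--Bass route used in those references: build the auxiliary $2\times 2$ block operator $M$, take Schur complements with respect to each diagonal block, and identify them as $\lambda-B$ and $-A_\lambda$ respectively. The key algebraic inputs you isolate --- the identity $B=\iota P-\Sigma$, the block-diagonality of $\Sigma^{2}=\bigoplus_{i}b_{i^{*}}b_{i}$, and the resolvent commutation $b_{i^{*}}R_{i^{*}}=R_{i}b_{i^{*}}$ --- are exactly the ingredients of the original proof, and your observation that the involution $i\mapsto i^{*}$ makes $\bigcup_{i}\mathrm{spec}(b_{i}b_{i^{*}})=\bigcup_{i}\mathrm{spec}(b_{i^{*}}b_{i})$ is the clean way to justify the well-definedness of all $R_{i}$ directly from the stated hypothesis. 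There is nothing to correct; the bookkeeping with the involution and the non-commuting factors is handled carefully.
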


\subsection{Centering and uniform Weingarten estimates}

To use Theorem \ref{th-non-backtracking}, one has to understand the spectral radius of the operator $B$ and, therefore, evaluate $\tau (B^TB^{*T})$
with $T$ growing with the matrix dimension, and this can be done through moment methods as soon as we
have uniform estimates on Weingarten functions. The first uniform estimate was obtained in  
\cite{MR3057186} and had powerful applications to the study of area laws in mathematical physics, 
however, it was not sufficient for norm estimates, and it was superseded by 
\cite{MR3680193}:
\begin{Th} 
For any $\sigma\in S_{k}$ and  
$n > \sqrt{6} k^{7/4}$, 
$$\frac{1}{1-\frac{k-1}{n^2}} \le 
\frac{n^{k+|\sigma|} \Wg(n,\sigma )}{\mathrm{Moeb}(\sigma)}  
\le \frac{1}{1- \frac{6 k^{7/2}}{n^2}}.$$
In addition, the l.h.s inequality is valid for any $n \ge k$.
\end{Th}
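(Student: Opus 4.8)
The plan is to exploit the combinatorial (Jucys--Murphy / Hurwitz) expansion of the unitary Weingarten function, equation~\eqref{eq:unitary-Wg-expansion1}, which writes
$$\Wg(n,\sigma) = n^{-k}\sum_{l\ge 0}\#P(\sigma,l)\,(-n^{-1})^l.$$
The term of lowest order in $n^{-1}$ is $l=|\sigma|$ (the minimal number of transpositions needed to factor $\sigma$), and by definition $\#P(\sigma,|\sigma|)=\mathrm{Moeb}(\sigma)$ up to sign; more precisely $\mathrm{Moeb}(\sigma)=(-1)^{|\sigma|}\#P(\sigma,|\sigma|)$, so that $n^{k+|\sigma|}\Wg(n,\sigma)/\mathrm{Moeb}(\sigma)$ is exactly $1 + \sum_{l>|\sigma|}\tfrac{\#P(\sigma,l)}{\#P(\sigma,|\sigma|)}(-n^{-1})^{l-|\sigma|}$. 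So the whole theorem reduces to \emph{quantitative control of the tail} of this alternating series: I need to show the tail is nonnegative (for the left inequality) and bounded above by an explicit quantity (for the right inequality).

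First I would set up the left inequality, which is the cleaner half and valid for all $n\ge k$. The key structural fact is that the $\#P(\sigma,l)$ are \emph{monotone increasing} in $l$ past the minimum --- this is where I would invoke (or reprove) the monotonicity statement that ``all unitary Weingarten functions and all their derivatives are monotone on $(k,\infty)$'', or rather its combinatorial shadow: adding an extra admissible transposition to a minimal factorization, in the Jucys--Murphy-ordered sense, only increases the count. Grouping the alternating series in consecutive pairs $(\#P(\sigma,l) - \#P(\sigma,l+1)/n)n^{-l}$ and using $\#P(\sigma,l+1)\le (\text{something})\cdot\#P(\sigma,l)$ with the right constant relative to $n\ge k$ shows each pair has a definite sign, giving $n^{k+|\sigma|}\Wg/\mathrm{Moeb}\ge 1$. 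To sharpen this to $\ge (1-\tfrac{k-1}{n^2})^{-1}$ --- equivalently $\ge 1 + \tfrac{k-1}{n^2} + \cdots$ --- I would isolate the first correction term ($l=|\sigma|+1$) and bound it below by $(k-1)n^{-2}\cdot(\text{leading})$; the ratio $\#P(\sigma,|\sigma|+1)/\#P(\sigma,|\sigma|)$ counts the ways to insert one non-minimal JM-transposition and is at least $k-1$ by an explicit counting argument on where the extra $(i\,j)$ can go.

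Next, the upper bound, which is the genuinely hard part and explains the hypothesis $n>\sqrt6\,k^{7/4}$. Here I need a uniform-in-$\sigma$ growth estimate $\#P(\sigma,l) \le C^{l-|\sigma|}\,\#P(\sigma,|\sigma|)$ with $C$ polynomial in $k$ --- concretely I expect $C\asymp k^{7/2}$, i.e.\ $\#P(\sigma,|\sigma|+m)\le (6k^{7/2})^m\#P(\sigma,|\sigma|)$ or something comparable. This is the technical core: one must bound how fast the number of monotone (Hurwitz-type) factorizations can grow as one relaxes from the minimal length, \emph{uniformly over all} $\sigma\in S_k$, not just for a fixed cycle type. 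The $k^{7/4}$ in the threshold is precisely $\sqrt{k^{7/2}}$, so the condition $n^2 > 6k^{7/2}$ is exactly what makes the geometric series $\sum_{m\ge1}(6k^{7/2}/n^2)^m$ converge, and summing it yields the stated $(1-6k^{7/2}/n^2)^{-1}$. I would prove the growth bound by a surjection/encoding argument: given a factorization of length $|\sigma|+m$, repeatedly ``cancel'' an adjacent pair to produce a minimal factorization plus bookkeeping data recording where the cancellations happened, and count the bookkeeping data by at most $(6k^{7/2})^m$. Controlling this encoding tightly --- making sure it is genuinely $\le 6k^{7/2}$ per step rather than, say, $k^4$ --- is the main obstacle; it requires a careful analysis of the positions and indices available for the extra transpositions in the JM order, likely splitting by whether the extra transposition acts within an existing cycle or merges/splits cycles. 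Once that uniform estimate is in hand, both inequalities follow by the elementary alternating-series and geometric-series manipulations sketched above, and one notes that the left inequality used no constraint beyond $n\ge k$ because it only needed the \emph{sign} pattern, not the rate.
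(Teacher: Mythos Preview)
There is a real gap in your lower-bound argument. You propose to treat $\sum_{l\ge 0}\#P(\sigma,l)(-n^{-1})^l$ as an alternating series and to pair consecutive terms $\#P(\sigma,l)-\#P(\sigma,l+1)/n$. But every transposition changes the sign of a permutation, so $\#P(\sigma,l)=0$ unless $l\equiv|\sigma|\pmod 2$; only every second term survives and the series is in fact
\[
\frac{n^{k+|\sigma|}\Wg(n,\sigma)}{\mathrm{Moeb}(\sigma)}\;=\;\sum_{m\ge 0}\frac{\#P(\sigma,|\sigma|+2m)}{\#P(\sigma,|\sigma|)}\,n^{-2m},
\]
a power series in $n^{-2}$ with \emph{nonnegative} coefficients. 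There is no alternation to tame, and your pairing step is vacuous. This parity observation is precisely why the correction in the statement is of size $n^{-2}$ rather than $n^{-1}$, and why the threshold $n>\sqrt{6}\,k^{7/4}$ is really the condition $n^2>6k^{7/2}$. With parity in hand, both inequalities become termwise comparisons with geometric series: the left one reduces to $\#P(\sigma,|\sigma|+2m)\ge(k-1)^m\,\#P(\sigma,|\sigma|)$, obtainable for all $n\ge k$ by the injection that appends pairs $(i\,k)(i\,k)$, $i<k$, to any monotone factorization; the right one reduces to $\#P(\sigma,|\sigma|+2m)\le(6k^{7/2})^m\,\#P(\sigma,|\sigma|)$, which is where the genuine work lies and where your encoding idea is pointed in the right direction.

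As for comparison with the paper: this manuscript is a survey and does not prove the theorem; it cites \cite{MR3680193}. There the estimates are obtained not from the Jucys--Murphy expansion directly but from Weingarten's orthogonality-relation recursion (the identity displayed just after \eqref{eq:unitary-Wg-expansion1} here), recast as a fixed-point equation and a path-counting problem. The two viewpoints are closely related --- they ultimately count the same weighted paths --- but working from the recursion gives cleaner access to the step ratios $\#P(\sigma,l+2)/\#P(\sigma,l)$ and hence to the constant $6k^{7/2}$, which your ``cancel an adjacent pair and record bookkeeping data'' scheme would have to recover from scratch.
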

This result already enables us to prove Theorem \ref{thm-bc2} in the case where $q_-\ne q_+$ because there are no 
fixed points there. 
Let us now outline how to tackle the case $q_-= q_+$, which is interesting because it has fixed points. 
To handle fixed points, we need to introduce the \emph{centering} of a random variable $X$, namely $[X]=X-E(X)$.
For a symbol $\varepsilon \in \{ \cdot,-\}$ and $z \in \mathbb{C}$, we take the notation that $z^{\varepsilon}=z$ if $\varepsilon=\cdot$ and 
$z^{\varepsilon}=\overline z$ if
$\varepsilon = -$.
We want to to compute, for $U = (U_{ij})$ Haar distributed on $\Un$, expresssions of the form
$\E \prod_{t=1}^T \Big[  \prod_{l = 1}^{k_t} U_{x_{tl} y_{tl }}^{\varepsilon_{tl}}\Big]$
in a meaningful way.
We can write a Weingarten formula:
$$\E \prod_{t=1}^T [  \prod_{l = 1}^{k_t} U_{x_{tl} y_{tl }}^{\varepsilon_{tl}}]=
\sum_{\sigma,\tau\in P_2(k_1+\ldots +k_T)}\delta_{\sigma, x}\delta_{\tau ,y}\Wg(\sigma, \tau ; k_1,\ldots ,k_T),$$
where the function $\Wg$ depends on the pairings and the partition.
We say that a block of the partition $\{\{1,\ldots , k_1\},\ldots, \{k_1+\ldots +k_{T-1}+1,\ldots, 
k_1+\ldots +k_T\}\}$ is \emph{lonesome} with respect to the pairing $(\sigma, \tau)$ iff the group generated by
$\sigma, \tau$ stabilizes it. In \cite{2012.08759}, we prove
\begin{Th}
$\Wg$ decays as $n^{-k}$ where $k=(k_1+\ldots +k_T)/2+d(\sigma,\tau)+2\# \mathrm{lonesome\,\, blocks}$,
and this estimate is uniform on $k\sim Poly (n)$. 
\end{Th}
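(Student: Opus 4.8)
The plan is to reduce the centered Weingarten estimate to the uncentered one via inclusion-exclusion, then track how centering converts ordinary cancellation into the extra power of $n^{-2}$ promised by lonesome blocks. First I would expand each centering bracket $[\prod_l U^{\varepsilon_{tl}}_{x_{tl}y_{tl}}]$ using $[X] = X - \E(X)$; iterating over the $T$ brackets produces a sum over subsets $S \subseteq \{1,\dots,T\}$ of the blocks, where the blocks in $S$ keep their monomials and the blocks outside $S$ are replaced by their expectations. Since $\E(\text{monomial})$ is itself a Weingarten sum over pairings internal to that block, each term in the expansion is a product of uncentered Haar moments, to which Theorem~\ref{th-general} and the uniform estimate of the previous theorem apply. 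The net effect is that $\Wg(\sigma,\tau;k_1,\dots,k_T)$ becomes an alternating sum of products $\prod \Wg(\text{sub-pairings})$, indexed by which blocks are ``resolved'' by the centering.

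Next I would organize the resulting alternating sum according to the orbit structure of the group $\langle \sigma,\tau\rangle$ acting on the blocks. A lonesome block $B$ is, by definition, stabilized by $\langle\sigma,\tau\rangle$, so the pairing $(\sigma,\tau)$ restricted to $B$ is a self-contained pairing that contributes a factor of exactly the uncentered single-bracket moment $\E(\prod_{l\in B} U^{\varepsilon}_{\cdot\cdot})$ to the leading term $S = \{1,\dots,T\}$. In the term where $B$ has been replaced by its expectation, that same factor appears with the opposite sign. The key computation — and the heart of the argument — is that these two contributions nearly cancel: their difference is $\E(\prod_{l\in B}\cdot) - \E(\prod_{l\in B}\cdot)$ after re-summing the internal pairings, and what survives is precisely the cross-terms, which the almost-multiplicativity of $\Wg$ (Theorem on $\Wg(\sigma_1\sqcup\sigma_2) = \Wg(\sigma_1)\Wg(\sigma_2)(1+O(n^{-2}))$) shows is of relative size $O(n^{-2})$ per lonesome block. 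Because there are $\#\text{lonesome blocks}$ of them and they are on disjoint index sets, the savings multiply, giving the factor $n^{-2\#\text{lonesome blocks}}$. The base power $n^{-(k_1+\dots+k_T)/2 - d(\sigma,\tau)}$ is the standard one coming from the number of loops in the Gram matrix computation of Theorem~\ref{th-un-on}, i.e. the ``distance'' $d(\sigma,\tau)$ between the two pairings.

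The main obstacle I anticipate is making the cancellation uniform when $k = k_1 + \dots + k_T$ is allowed to grow polynomially in $n$, rather than staying fixed. In the fixed-$k$ regime the error terms are genuinely $O(n^{-2})$ and one can be cavalier about constants; but here one must use the explicit two-sided bound $\tfrac{1}{1-(k-1)/n^2} \le n^{k+|\sigma|}\Wg(n,\sigma)/\mathrm{Moeb}(\sigma) \le \tfrac{1}{1-6k^{7/2}/n^2}$ from the previous theorem, valid only for $n > \sqrt 6 k^{7/4}$, to control \emph{every} sub-Weingarten appearing in the inclusion-exclusion simultaneously. One has to check that the number of terms in the expansion (at most $2^T$, hence at most exponential in $k$) does not overwhelm the $n^{-2}$ gains — this forces the regime $k \sim \mathrm{Poly}(n)$ with a controlled exponent, and the bookkeeping of which $\mathrm{Moeb}$-factors combine across the alternating sum is delicate. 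A secondary difficulty is the combinatorial lemma identifying exactly when a block is lonesome and verifying that non-lonesome blocks contribute no spurious extra decay that would make the stated exponent non-sharp; this I would handle by a careful orbit-counting argument on the action of $\langle\sigma,\tau\rangle$, showing that blocks which are merged with others by the group action already pay for themselves through the base $d(\sigma,\tau)$ term, so that only the stabilized (lonesome) blocks produce the additional $n^{-2}$.
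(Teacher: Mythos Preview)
The paper does not actually supply a proof of this theorem: it states the result, attributes it to the reference \cite{2012.08759} (Bordenave--Collins), and immediately moves on to say that ``This Theorem, together with a comparison with Gaussian vectors, allows proving Theorem~\ref{thm-bc2}.'' So there is no in-paper argument against which to compare your proposal line by line.

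That said, your outline is pointed in the right direction and captures the correct intuition. The inclusion--exclusion expansion of the centerings $[X]=X-\E X$ is the natural starting move, and your identification of the mechanism --- that a lonesome block factors off in the leading term and is then subtracted by the centering, leaving only the multiplicativity defect $\Wg(\sigma_1\sqcup\sigma_2)-\Wg(\sigma_1)\Wg(\sigma_2)=O(n^{-2})\Wg(\sigma_1)\Wg(\sigma_2)$ --- is exactly why each lonesome block buys an extra $n^{-2}$. The base exponent $(k_1+\dots+k_T)/2+d(\sigma,\tau)$ is correctly read off from the standard loop count.

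The weak point is the one you yourself flag but do not resolve: the uniformity in $k\sim\mathrm{Poly}(n)$. Your expansion over subsets $S\subseteq\{1,\dots,T\}$ has $2^T$ terms, and $T$ can be of the same order as $k$; an exponential-in-$k$ number of terms, each controlled only up to constants like $(1-6k^{7/2}/n^2)^{-1}$, will not obviously sum to something bounded when $k$ is a power of $n$. In the actual argument of \cite{2012.08759} this is not handled by brute-force summation over all $2^T$ subsets followed by the two-sided bound; rather, the cancellations are organized more carefully (grouping terms by which connected components of $\langle\sigma,\tau\rangle$ are entirely contained in the ``resolved'' set, and exploiting that only lonesome blocks can be so contained), so that the effective number of surviving terms is polynomial rather than exponential. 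Your sketch would need to incorporate that reorganization before the uniform statement follows; as written, the crude $2^T$ bound is a genuine gap.
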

This Theorem, together with a comparison with Gaussian vectors, allows proving Theorem \ref{thm-bc2}.

\section{Perspectives}

Understanding better how to integrate over compact groups is a fascinating problem connected to many questions in various branches of mathematics and other scientific fields. 
We conclude this manuscript with a brief and completely subjective list of perspectives. 

\begin{enumerate}
\item
\emph{Uniform measures on  (quantum) symmetric spaces:}

Viewing a group as a compact manifold, can one extend the Weingarten calculus to other surfaces? 
Some substantial work has been done algebraically in this direction by Matsumoto \cite{MR3077830} 
in the case of symmetric spaces, see as well \cite{MR2408577} for the asymptotic version.
It would be interesting to study extensions of Matsumoto's results for compact quantum symmetric spaces. 

\item
\emph{Surfaces and Geometric group theory:}

An important observation by Magee and Puder is that if $G$ is a compact subgroup of $\Un$, the Haar measure
on $G^k$ yields a random representation of the free group $F_k$ on $\Un$ whose law is invariant under \emph{outer automorphisms} of $F_k$. This motivated them to compute, in \cite{MR4011702},
the expectation of the trace of non-trivial words in $(U_1,\ldots , U_k)\in \Un^k$. 
In addition to refining known asymptotics, they used the properties of the Weingarten
function to solve non-trivial problems about the orbits of  $F_k$ under the action by its outer conjugacy group.
In a different vein, Magee has very recently achieved a breakthrough by obtaining the first steps of Weingarten
calculus for representations of some one relator groups \cite{2101.00252,2101.03224}.

\item
\emph{Other applications to representation theory:}

The problem of calculating Weingarten functions on $SU(n)$ efficiently is complicated,
and even more so when the degree is high in comparison to $n$. A striking example is
$\int_{U\in SU(n)}\prod_{i,j=1}^n u_{ij}$. It was established in \cite{MR3322811}
that proving that this integral is non-zero
is equivalent to the Alon-Tarski conjecture.

More generally, this raises the question of computing efficiently integrals of high degree
on classical groups (typically, of degree $\ge n$ or $\ge n^2$).
Weingarten calculus, as developed in this manuscript, is not well adapted to this task. 
Some results in this direction have been obtained by Novak (\cite{2006.04304})
the author and Cioppa. 

\item
\emph{More tensors and norm estimates:}

In \cite{MR4024563,2012.08759}, we obtained strong convergence for an arbitrary finite number of tensors of random unitaries -- or random permutations. It turns out that the result can be relaxed a bit
to allow  the number 
of legs to vary slowly to infinity as the dimension of the group goes to infinity. 
This points to a double limit problem, and we wonder to which extent the number of legs of tensors and the size of the matrix can be independent. In the extreme case, could strong freeness hold for a given finite group but a number of tensors tending to infinity? 
Many variants of this problem exist, e.g., taking iid copies of unitaries instead of the same. 

Likewise, an important question is the behavior of $U_i\otimes U_j$ for arbitrary indices -- not only $i=j$ as in  \cite{MR4024563,2012.08759}. As observed by Hayes in \cite{2008.12287},
this is a possible approach towards the Peterson-Thom conjecture in operator algebras, and
it seems plausible that Weingarten calculus could help to solve this problem. 
\footnote{This manuscript was submitted to the ICM in the fall 2021. In the spring 2022, S. Belinschi and
M. Capitaine announced a proof of the Peterson-Thom conjecture in \cite{2205.07695}.}

\item
\emph{Maximizing functionals over groups:}

Given a polynomial function $f: G\to \mathbb{C}$, finding $m=\max_{U\in G} |f(U)|$ could provide approaches
to various conjectures in analysis in analysis or algebra. 
In general, finding the argmax is a problem intricately linked to the conjectures, and Haar integration could yield
 non-constructive approaches.
 Indeed, $l^{-1}\log\int (f(U)\overline{f(U)})^ld\mu_G(U)\sim 2\log m$, and the left hand side could in principle
 be approached with Weingarten calculus.
 Let us mention the example of the 
 Hadamard conjecture. It states that for any $4/n$, there exists an
 orthogonal matrix in $\M_n (\mathbb{R})$ whose entries are $\pm 1$. 
 An approach to this problem would be to show that the minimum of the polynomial function $f(U)=\sum_{ij}u_{ij}^4$ on $\mathcal{O}_n$ is $1$.
 We refer to \cite{MR2745424} for attempts with Weingarten calculus. 
 We also believe that some important problems in operator algebra could be approached that way (e.g., the problem of the non-existence of hyperlinear group).

\end{enumerate}

\section*{Acknowledgments}

It has always been highly stimulating to work with people from very diverse backgrounds:  
coauthors, graduate students, postdoctoral fellows. I want to thank
them all for our collaborations. 
I would like to thank Charles Bordenave, Mike Brannan, Luca Lionni, Sho Matsumoto, Akihiro Miyagawa, Ion Nechita
and
Jonathan Novak for reading my manuscript carefully and for many suggestions of improvements.

\section*{Funding}
This work was partially supported by JSPS Kakenhi 17H04823, 20K20882, 21H00987.
Most of the results presented in this note have been supported by JSPS Kakenhi, NSERC grants, and ANR grants, while the author was working at either of the following places: CNRS (Lyon 1), the University of Ottawa, Kyoto University.

\end{document}